\numberwithin{equation}{section}
\theoremstyle{plain}
\newtheorem{theorem}{Theorem}[section]
\newtheorem{lemma}{Lemma}[section]
\newtheorem{corollary}{Corollary}[section]
\newtheorem{proposition}{Proposition}[section]
\theoremstyle{definition}
\newtheorem{definition}{Definition}[section]
\newtheorem{assumption}{Assumption}[section]
\theoremstyle{remark}
\newtheorem{remark}{Remark}[section]
\newcommand{\ind}{\operatorname{ind}}
\newcommand{\id}{\operatorname{id}}
\newcommand{\dom}{\operatorname{dom}}
\newcommand{\loc}{\operatorname{loc}}
\newcommand{\cc}{{cc}}
\newcommand{\dm}{\partial M}
\newcommand{\RR}{\mathbb{R}}
\newcommand{\ZZ}{\mathbb{Z}}
\newcommand{\AAA}{\mathcal{A}}
\newcommand{\DD}{\mathcal{D}}
\newcommand{\hDD}{\hat{\mathcal{D}}}
\newcommand{\upper}{\uppercase\expandafter}
\begin{document}

\title[Callias operators with APS boundary conditions]{The index of Callias-type operators with Atiyah-Patodi-Singer boundary conditions}

\author{Pengshuai Shi}
\address{Department of Mathematics,
Northeastern University,
Boston, MA 02115,
USA}

\email{shi.pe@husky.neu.edu}


\begin{abstract}
We compute the index of a Callias-type operator with APS boundary condition on a manifold with compact boundary in terms of combination of indexes of induced operators on a compact hypersurface. Our result generalizes the classical Callias-type index theorem to manifolds with compact boundary. 
\end{abstract}

\maketitle

\section{Introduction}\label{S:intro}

Constantine Callias in \cite{Callias78}, considered a class of perturbed Dirac operators on an odd-dimensional Euclidean space which are Fredholm and found a beautiful formula for the  index of such operators. This result was soon generalized to Riemannian manifolds by many authors,  \cite{BottSeeley78}, \cite{BruningMoscovici},  \cite{Anghel93}, \cite{Raade}, \cite{Bunke}. A nice character of the Callias index theorem is that it reduces a noncompact index to a compact one. Recently, many new properties, generalizations and applications of Callias-type index were found, cf., for example, \cite{Kottke11}, \cite{CarvalhoNistor14}, \cite{Wimmer14}, \cite{Kottke15}, \cite{BrCecchini16}, \cite{BrShi}. 

In this paper we extend the Callias-type index theory to manifolds with compact boundary. The study of the index theory on compact manifolds with boundary was initiated in \cite{AB}. In the seminal paper \cite{APS}, Atiyah, Patodi and Singer computed the index of a first order elliptic operator with a non-local boundary condition. This so-called Atiyah-Patodi-Singer (APS) boundary condition is defined using the spectrum of a self-adjoint operator associated to the restriction of the original operator to the boundary. The Atiyah-Patodi-Singer index theorem inspired an intensive study of boundary value problems for first-order elliptic operators, especially Dirac-type operators (see \cite{BW} for compact manifolds). Recently, B\"ar and Ballmann in \cite{BB-1} gave a thorough description of boundary value problems for first-order elliptic operators on (not necessarily compact) manifolds with compact boundary. They obtained the Fredholm property for Callias-type operators with APS boundary conditions, making it possible to study the index problem on noncompact manifolds with boundary. The results in \cite{BB-1} were also partially generalized to Spin$^c$ manifolds of bounded geometry with noncompact boundary in \cite{GN}.

In this paper we combine the results  of \cite{APS}, \cite{BB-1} and \cite{Callias78} and  compute the index of Callias-type operators with APS boundary conditions. We show that this index is equal to a combination of indexes of the induced operators on a compact hypersurface and a boundary term which appears in APS index theorem. Thus our result generalizes the Callias index theorem to manifolds with boundary. We point out that our proof technique leads to a new proof of the classical (boundaryless) Callias index theorem. Recently the results of this paper were partially extended to the case of noncompact boundary in \cite{BrShi17, BrShi17-2}.

The paper is organized as follows. In Section \ref{S:mfldwbd}, we introduce the basic setting for manifolds with compact boundary. In Section \ref{S:preliminary}, we discuss some results from \cite{BB-1} about boundary value problems of Dirac-type operators with the focus on APS boundary condition. Also, we recall the splitting theorem and relative index theorem which will play their roles in proving the main theorem. Then in Section \ref{S:APS-Callias}, we study the above-mentioned APS-Callias index problem and give our main result in Theorem \ref{T:Calliasindthm}, followed by some consequences. The theorem is proved in Section \ref{S:proof}.

\subsection*{Acknowledgment} I am very grateful to Prof. Maxim Braverman for bringing this problem to my attention and offering valuable suggestions. I would also like to thank Simone Cecchini and Chris Kottke for helpful discussions. Finally, I appreciate the constructive comments and suggestions by the reviewers.

\section{Manifolds with compact boundary}\label{S:mfldwbd}

We introduce the basic notations that will be used later.

\subsection{Setting}\label{SS:setting}

Let $M$ be a Riemannian manifold with compact boundary $\dm$. We assume the manifold is complete in the sense of metric spaces and call it a \emph{complete} Riemannian manifold throughout this paper. We denote by $dV$ the volume element on $M$ and by $dS$ the volume element on $\dm$. The interior of $M$ is denoted by $\mathring{M}$. For a vector bundle $E$ over $M$, $C^\infty(M,E)$ is the space of smooth sections of $E$, $C_c^\infty(M,E)$ is the space of smooth sections of $E$ with compact support, and $C_\cc^\infty(M,E)$ is the space of smooth sections of $E$ with compact support in $\mathring{M}$. Note that
\[
C_\cc^\infty(M,E)\subset C_c^\infty(M,E)\subset C^\infty(M,E).
\]
When $M$ is compact, $C_c^\infty(M,E)=C^\infty(M,E)$; when $\dm=\emptyset$, $C_\cc^\infty(M,E)=C_c^\infty(M,E)$. We denote by $L^2(M,E)$ the Hilbert space of square-integrable sections of $E$, which is the completion of $C_c^\infty(M,E)$ with respect to the norm induced by the $L^2$-inner product
\[
(u_1,u_2)\;:=\;\int_M\langle u_1,u_2\rangle dV,
\]
where $\langle\cdot,\cdot\rangle$ denotes the fiberwise inner product.

Let $E,F$ be two Hermitian vector bundles over $M$ and $D:C^\infty(M,E)\to C^\infty(M,F)$ be a first-order differential operator. The \emph{formal adjoint} of $D$, denoted by $D^*$, is defined by
\[
\int_M\langle Du,v\rangle dV\;=\;\int_M\langle u,D^*v\rangle dV,
\]
for all $u\in C_\cc^\infty(M,E)$ and $v\in C^\infty(M,F)$. If $E=F$ and $D=D^*$, then $D$ is called \emph{formally self-adjoint}.

\subsection{Minimal and maximal extensions}\label{SS:minmaxext}

Suppose $D_\cc:=D|_{C_\cc^\infty(M,E)}$, and view it as an unbounded operator from $L^2(M,E)$ to $L^2(M,F)$. The \emph{minimal extension} $D_{\min}$ of $D$ is the operator whose graph is the closure of that of $D_\cc$. The \emph{maximal extension} $D_{\max}$ of $D$ is defined to be $D_{\max}=\big((D^*)_{cc}\big)^{\rm ad}$, where ``ad'' means adjoint of the operator in the sense of functional analysis. Both $D_{\min}$ and $D_{\max}$ are closed operators. Their domains, $\dom D_{\min}$ and $\dom D_{\max}$, become Hilbert spaces equipped with the \emph{graph norm}, which is the norm associated with the inner product
\[
(u_1,u_2)_D\;:=\;\int_M(\langle u_1,u_2\rangle+\langle Du_1,Du_2\rangle)dV.
\]

\subsection{Green's formula}\label{SS:greensfor}

Let $\tau\in TM|_{\dm}$ be the unit inward normal vector field along $\dm$. Using the Riemannian metric, $\tau$ can be identified with its associated one-form. We have the following formula (cf. \cite[Proposition 3.4]{BW}).

\begin{proposition}[Green's formula] \label{P:greensfor}
Let $D$ be as above. Then for all $u\in C_c^\infty(M,E)$ and $v\in C_c^\infty(M,F)$,
\begin{equation}\label{E:greensfor}
\int_M\langle Du,v\rangle dV\;=\;\int_M\langle u,D^*v\rangle dV\,-\,\int_{\dm}\langle\sigma_D(\tau)u,v\rangle dS,
\end{equation}
where $\sigma_D$ denotes the principal symbol of the operator $D$.
\end{proposition}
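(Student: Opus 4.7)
The plan is to derive Green's formula from the classical divergence theorem via a pointwise identity. The observation that drives the proof is that, for any first-order differential operator $D$ between Hermitian bundles, the quantity $\langle Du,v\rangle - \langle u, D^*v\rangle$ can be written at every point as the divergence of an explicit vector field $X_{u,v}$ built from the principal symbol. Concretely, let $\omega_{u,v}$ be the 1-form defined by $\omega_{u,v}(Y) := \langle \sigma_D(Y^\flat) u, v\rangle$ for every vector field $Y$, and let $X_{u,v}$ be its metric dual. Here $\omega_{u,v}$ is intrinsically well-defined because the principal symbol is, and hence so is $X_{u,v}$.

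To establish the pointwise identity $\langle Du,v\rangle - \langle u, D^*v\rangle = \operatorname{div}(X_{u,v})$, I would work in a local chart with a local orthonormal frame, writing $D = \sum_j A_j \partial_j + B$ with $A_j = \sigma_D(dx^j)$ in the chart. The formal adjoint, computed in the same frame, takes the form $D^* = -\sum_j A_j^* \partial_j + B'$ for a suitable zeroth-order term $B'$. Expanding the pairings and applying the Leibniz rule yields
\[
\langle Du,v\rangle - \langle u, D^*v\rangle \;=\; \sum_j \partial_j \langle A_j u, v\rangle \,+\, \Big(\langle Bu,v\rangle - \langle u, B'v\rangle - \sum_j \langle (\partial_j A_j) u, v\rangle\Big),
\]
and the parenthesized combination vanishes because this is precisely what characterizes $B'$ via the defining pairing of $D^*$ against $C_\cc^\infty$ sections. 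The remaining divergence equals $\operatorname{div}(X_{u,v})$ in the orthonormal frame, and global well-definedness is automatic since both sides are coordinate-invariant.

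With the pointwise identity in hand, I would integrate over $M$ and invoke the divergence theorem. Because $u$ and $v$ are compactly supported, so is $X_{u,v}$, and no issue arises from the noncompactness of $M$. The divergence theorem yields
\[
\int_M \operatorname{div}(X_{u,v}) \, dV \;=\; \int_{\dm} g(X_{u,v}, \nu) \, dS,
\]
where $\nu$ is the outward unit normal. Since $\tau$ is the inward normal, $\nu = -\tau$, and by definition of $X_{u,v}$ we have $g(X_{u,v}, \tau) = \omega_{u,v}(\tau) = \langle \sigma_D(\tau) u, v\rangle$. Substituting produces the boundary contribution $-\int_{\dm} \langle \sigma_D(\tau) u, v\rangle \, dS$, which after rearrangement is exactly \eqref{E:greensfor}.

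The main obstacle is bookkeeping rather than conceptual: one must verify that the locally computed formal adjoint agrees with the one defined in \ref{SS:minmaxext} via pairing against $C_\cc^\infty$ sections. This is harmless, since two differential operators that agree under pairing against a dense set of test sections must coincide as differential expressions, but it is the point at which the abstract definition of $D^*$ meets the concrete coordinate formula that powers the proof. Apart from that, care is needed only with the sign arising from $\tau$ being the inward rather than outward normal, which is precisely what produces the minus sign on the boundary term in \eqref{E:greensfor}.
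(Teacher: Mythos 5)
Your proof is correct: the paper does not prove this proposition itself but cites \cite[Proposition 3.4]{BW}, and your argument — the pointwise identity $\langle Du,v\rangle-\langle u,D^*v\rangle=\operatorname{div}(X_{u,v})$ with $X_{u,v}$ dual to the symbol-built one-form, followed by the divergence theorem and the sign flip from the inward normal $\tau=-\nu$ — is exactly the standard route taken in that reference. The only point to keep an eye on is that the Riemannian volume density contributes zeroth-order terms to the locally computed adjoint, but as you note these are absorbed into $B'$ and cancel by the defining pairing of $D^*$ against $C_\cc^\infty$ sections, so the argument goes through.
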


\begin{remark}\label{R:greensfor}
By \cite[Theorem 6.7]{BB-1}, the formula \eqref{E:greensfor} can be generalized to the case where $u\in\dom D_{\max}$ and $v\in\dom(D^*)_{\max}$.
\end{remark}

\subsection{Sobolev spaces}\label{SS:sobsp}

Let $\nabla^E$ be a Hermitian connection on $E$. For any $u\in C^\infty(M,E)$, the covariant derivative $\nabla^E u\in C^\infty(M,T^*M\otimes E)$. For $k\in\ZZ_+$, we define the \emph{$k^{th}$ Sobolev space}
\[
H^k(M,E)\;:=\;\{u\in L^2(M,E):\nabla^E u,(\nabla^E)^2u,\dots,(\nabla^E)^ku\in L^2(M)\},
\]
where the covariant derivatives are understood in distributional sense. It is a Hilbert space with $H^k$-norm
\[
\|u\|_{H^k(M)}^2\;:=\;\|u\|_{L^2(M)}^2+\|\nabla^E u\|_{L^2(M)}^2+\cdots+\|(\nabla^E)^ku\|_{L^2(M)}^2.
\]
Note that when $M$ is compact, $H^k(M,E)$ does not depend on the choices of $\nabla^E$ and Riemannian metric, but when $M$ is noncompact, it does.

We say $u\in L_{\loc}^2(M,E)$ if the restrictions of $u$ to compact subsets of $M$ have finite $L^2$-norm. For $k\in\ZZ_+$, we say $u\in H_{\loc}^k(M,E)$, the \emph{$k^{th}$ local Sobolev space}, if $u,\nabla^Eu,(\nabla^E)^2u,\dots,(\nabla^E)^ku$ all lie in $L_{\rm loc}^2(M,E)$. This Sobolev space is independent of the preceding choices.
 
Similarly, we fix a Hermitian connection on $F$ and define the spaces $L^2(M,F)$, $L^2_{\loc}(M,F)$, $H^k(M,F)$, and $H^k_{\loc}(M,F)$.

\section{Preliminary results}\label{S:preliminary}

In this section, we summarize some results on boundary value problems on complete manifolds with compact boundary. We mostly follow \cite{BB-1, BB-2}.

\subsection{Adapted operators to Dirac-type operators}\label{SS:adapop}

Let $E$ be a Clifford module over $M$ with Clifford multiplication denoted by $c(\cdot)$. We say that $D:C^\infty(M,E)\to C^\infty(M,E)$ is a \emph{Dirac-type operator} if the principal symbol of $D$ is $c(\cdot)$. In local coordinates, $D$ can be written as
\begin{equation}\label{E:Diracopr}
D\;=\;\sum_{j=1}^nc(e_j)\nabla_{e_j}^E+V
\end{equation}
at $x\in M$, where $e_1,\dots,e_n$ is an orthonormal basis of $T_xM$ (using Riemannian metric to identify $TM$ and $T^*M$), $\nabla^E$ is a Hermitian connection on $E$ and $V\in{\rm End}(E)$ is the \emph{potential}. When $V=0$, $D$ is called \emph{Dirac operator}.

The formal adjoint $D^*$ of a Dirac-type operator $D$ is also of Dirac type. Note that for $x\in\dm$, one can identify $T_x^*\dm$ with the space $\{\xi\in T_x^*M:\langle\xi,\tau(x)\rangle=0\}$.

\begin{definition}\label{D:adaptedopr}
A formally self-adjoint first-order differential operator $A:C^\infty(\dm,E)\to C^\infty(\dm,E)$ is called an \emph{adapted operator} to $D$ if the principal symbol of $A$ is given by
\[
\sigma_A(\xi)\;=\;\sigma_D(\tau(x))^{-1}\circ\sigma_D(\xi).
\]
\end{definition}

\begin{remark}\label{R:adaptedopr}
Adapted operators always exist and are also of Dirac type. They are unique up to addition of a Hermitian bundle map of $E$ (cf. \cite[Section 3]{BB-2}).
\end{remark}

If $A$ is adapted to $D$, then
\begin{equation}\label{E:tildeA}
\tilde A\;=\;c(\tau)\circ(-A)\circ c(\tau)^{-1}
\end{equation}
is an adapted operator to $D^*$. Moreover, if $D$ is formally self-adjoint, we can find an adapted operator $A$ to $D$ such that
\begin{equation}\label{E:anticommuting}
A\circ c(\tau)\;=\;-c(\tau)\circ A,
\end{equation}
and, hence, $\tilde{A}= A$.

By definition, $A$ is an essentially self-adjoint elliptic operator on the closed manifold $\dm$. Hence $A$ has discrete spectrum consisting of real eigenvalues $\{\lambda_j\}_{j\in\ZZ}$, each of which has finite multiplicity. In particular, the corresponding eigenspaces $V_j$ are finite-dimensional. Thus we have decomposition of $L^2(\dm,E)$ into a direct sum of eigenspaces of $A$:
\begin{equation}\label{E:L2decom}
L^2(\dm,E)\;=\;\bigoplus\limits_{\lambda_j\in{\rm spec}(A)}V_j.
\end{equation}
For any $s\in\RR$, the positive operator $(\id+A^2)^{s/2}$ is defined by functional calculus. Then the \emph{$H^s$-norm} on $C^\infty(\dm,E)$ is defined by
\[
\|u\|_{H^s(\dm,E)}^2\;:=\;\|(\id+A^2)^{s/2}u\|_{L^2(\dm,E)}^2.
\]
The Sobolev space $H^s(\dm,E)$ is the completion of $C^\infty(\dm,E)$ with respect to this norm.

\begin{remark}\label{R:sobequiv}
When $s\in\ZZ_+$, this definition of Sobolev spaces coincides with that of Subsection \ref{SS:sobsp} via covariant derivatives.
\end{remark}

For $I\subset\RR$, let
\begin{equation}\label{E:specproj}
P_I\;:\;L^2(\dm,E)\;\to\;\bigoplus\limits_{\lambda_j\in I}V_j
\end{equation}
be the orthogonal spectral projection. It's easy to see that
\[
P_I(H^s(\dm,E))\;\subset\;H^s(\dm,E)
\]
for all $s\in\RR$. Set $H_I^s(A):=P_I(H^s(\dm,E))$. For $a\in\RR$, we define the \emph{hybrid} Sobolev space
\begin{equation}\label{E:checkH}
\check{H}(A)\;:=\;H_{(-\infty,a)}^{1/2}(A)\;\oplus\;H_{[a,\infty)}^{-1/2}(A)
\end{equation}
with $\check{H}$-norm
\[
\|u\|_{\check{H}(A)}^2\;:=\;
\|P_{(-\infty,a)}u\|_{H^{1/2}(\dm,E)}^2\;+\;
\|P_{[a,\infty)}u\|_{H^{-1/2}(\dm,E)}^2.
\]
The space $\check{H}(A)$ is independent of the choice of $a$ (cf. \cite[p. 27]{BB-1}).

\subsection{Boundary value problems}\label{SS:bvp}

Let $D$ be a Dirac-type operator. If $\dm=\emptyset$, then $D$ has a unique extension, i.e., $D_{\min}=D_{\max}$. (When $D$ is formally self-adjoint, this is called \emph{essentially self-adjointness}, cf. \cite{Chernoff}, \cite[Theorem 1.17]{GL}.) But when $\dm\ne\emptyset$, the minimal and maximal extensions may not be equal. Those closed extensions lying between $D_{\min}$ and $D_{\max}$ give rise to boundary value problems.

One of the main results of \cite{BB-1} is the following.

\begin{theorem}\label{T:bc}
For any closed subspace $B\subset\check{H}(A)$, denote by $D_B$ the extension of $D$ with domain
\[
\dom D_B\;=\;\{u\in\dom D_{\max}:u|_{\dm}\in B\}.
\]
Then $D_B$ is a closed extension of $D$ between $D_{\min}$ and $D_{\max}$, and any closed extension of $D$ between $D_{\min}$ and $D_{\max}$ is of this form.
\end{theorem}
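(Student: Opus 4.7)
The plan is to reduce Theorem \ref{T:bc} to the trace theorem of \cite{BB-1}, which extends the boundary restriction $u\mapsto u|_{\dm}$ from smooth sections to a bounded surjective linear map
\[
\mathcal{R}:\dom D_{\max}\;\longrightarrow\;\check{H}(A),
\]
where $\dom D_{\max}$ carries the graph norm and $\ker\mathcal{R}=\dom D_{\min}$. Granting this, the assertion of the theorem becomes a statement about the lattice of closed subspaces of the quotient Banach space $\dom D_{\max}/\dom D_{\min}$, which under $\mathcal{R}$ is identified with $\check{H}(A)$.

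For the first half, I would verify the three required properties of $D_B$ separately. Since $B$ is a subspace, it contains $0$, so $\dom D_{\min}=\ker\mathcal{R}\subset\dom D_B\subset\dom D_{\max}$, exhibiting $D_B$ as an extension of $D_{\min}$ contained in $D_{\max}$. Closedness of $D_B$ then follows from $\dom D_B=\mathcal{R}^{-1}(B)$: continuity of $\mathcal{R}$ in the graph norm together with closedness of $B$ in $\check{H}(A)$ makes $\dom D_B$ closed inside $(\dom D_{\max},\text{graph norm})$, and since $D_{\max}$ itself is already closed, so is its restriction $D_B$.

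For the converse, suppose $\tilde{D}$ is a closed extension with $D_{\min}\subset\tilde{D}\subset D_{\max}$, and set $B:=\mathcal{R}(\dom\tilde{D})$. Tautologically $\dom\tilde{D}\subset\mathcal{R}^{-1}(B)=\dom D_B$; conversely, any $u\in\dom D_B$ satisfies $\mathcal{R}(u)=\mathcal{R}(v)$ for some $v\in\dom\tilde{D}$, whence $u-v\in\ker\mathcal{R}=\dom D_{\min}\subset\dom\tilde{D}$ forces $u\in\dom\tilde{D}$. Therefore $\tilde{D}=D_B$, \emph{provided} one knows that the subspace $B$ so defined is actually closed in $\check{H}(A)$.

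This closedness of $B$ is the only non-tautological point and is the main obstacle. I would handle it via the open mapping theorem: since $\ker\mathcal{R}\subset\dom\tilde{D}$, the map $\mathcal{R}$ descends to a continuous linear bijection of Banach spaces from $\dom D_{\max}/\dom D_{\min}$ (graph norm) onto $\check{H}(A)$, and is therefore a topological isomorphism. Because $\tilde{D}$ is closed, $\dom\tilde{D}$ is closed in $\dom D_{\max}$ in the graph norm, hence its image in the quotient is closed, and applying the isomorphism yields that $B$ is closed in $\check{H}(A)$. This completes the correspondence between closed extensions between $D_{\min}$ and $D_{\max}$ and closed subspaces of $\check{H}(A)$.
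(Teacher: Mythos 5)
Your argument is correct, and it is essentially the argument behind this statement in the literature: the paper itself gives no proof of Theorem \ref{T:bc} but imports it from B\"ar--Ballmann \cite{BB-1}, and there the result is obtained exactly as you do it, by first establishing the trace theorem in the strong form you quote (the trace map extends to a bounded \emph{surjective} map $\mathcal{R}:\dom D_{\max}\to\check{H}(A)$ in the graph norm with $\ker\mathcal{R}=\dom D_{\min}$) and then running the soft functional-analytic correspondence: closedness of $\dom D_B=\mathcal{R}^{-1}(B)$ in the graph norm, the computation $\dom\tilde D=\mathcal{R}^{-1}(\mathcal{R}(\dom\tilde D))$ using $\ker\mathcal{R}\subset\dom\tilde D$, and the open mapping theorem applied to the induced bijection $\dom D_{\max}/\dom D_{\min}\to\check{H}(A)$ to see that $B=\mathcal{R}(\dom\tilde D)$ is closed. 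All of these steps are carried out correctly in your proposal. The one thing to be explicit about is that the entire analytic content is concentrated in the cited input: the trace theorem recalled in the remark following Theorem \ref{T:bc} (boundedness of $H^k_{\loc}(M,E)\to H^{k-1/2}(\dm,E)$ for $k\ge 1$) is strictly weaker than what you need and would not suffice, since $\dom D_{\max}$ is much larger than $H^1_{\loc}$ and the target must be the hybrid space $\check{H}(A)$, with surjectivity and the kernel identification $\ker\mathcal{R}=\dom D_{\min}$; the correct reference is the $\check{H}(A)$-trace theorem of \cite{BB-1} (their Theorem 6.7, cited in Remark \ref{R:greensfor}). Granting that result, your proof is complete; proving it is where the real work of \cite{BB-1} lies.
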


\begin{remark}
We recall the \emph{trace theorem} which says that the trace map $\cdot|_{\dm}:C_c^\infty(M,E)\to C^\infty(\dm,E)$ extends to a bounded linear map
\[
T\;:\;H_{\loc}^k(M,E)\;\to\;H^{k-1/2}(\dm,E)
\]
for all $k\ge1$.
\end{remark}

Due to this theorem, one can define boundary conditions in the following way.

\begin{definition}\label{D:bc}
A \emph{boundary condition} for $D$ is a closed subspace of $\check{H}(A)$. We use the notation $D_B$ from Theorem \ref{T:bc} to denote the operator $D$ with boundary condition $B$.
\end{definition}

Regarding $D_B$ as an unbounded operator on $L^2(M,E)$, its adjoint operator is $D_{B^{\rm ad}}^*$, where the boundary condition is
\[
B^{\rm ad}\;=\;\{v\in\check{H}(\tilde A):(\sigma_D(\tau)u,v)=0,\mbox{ for all }u\in B\},
\]
and $\tilde A$ is an adapted operator to $D^*$.

\subsection{Elliptic boundary conditions}\label{SS:ellbc}

Notice that for general boundary conditions, $\dom D_B\not\subset H_{\loc}^1(M,E)$.

\begin{definition}\label{D:ellbc}
A boundary condition $B$ is said to be \emph{elliptic} if $\dom D_B\subset H_{\loc}^1(M,E)$ and $\dom D_{B^{\rm ad}}^*\subset H_{\loc}^1(M,E)$.
\end{definition}

\begin{remark}\label{R:ellbc}
This definition is equivalent
to saying that $B\subset H^{1/2}(\dm,E)$ and its adjoint boundary condition $B^{\rm ad}\subset H^{1/2}(\dm,E)$ {(cf. \cite[Theorem 1.7]{BB-1})}. There is another equivalent but technical way to define elliptic boundary conditions, see \cite[Definition 7.5]{BB-1} or \cite[Definition 4.7]{BB-2}. From \cite{BB-1,BB-2}, $B$ is an elliptic boundary condition if and only if $B^{\rm ad}$ is.
\end{remark}

The definition of elliptic boundary condition can be generalized as follows.

\begin{definition}\label{D:higherellbc}
A boundary condition $B$ is said to be
\begin{enumerate}
\item \emph{$m$-regular}, where $m\in\ZZ_+$, if
\[
\begin{aligned}
D_{\max}u\in H_{\loc}^k(M,E)\;&\Longrightarrow\;u\in H_{\loc}^{k+1}(M,E), \\
D_{\max}^*v\in H_{\loc}^k(M,E)\;&\Longrightarrow\;v\in H_{\loc}^{k+1}(M,E)
\end{aligned}
\]
for all $u\in\dom D_B$, $v\in\dom D_{B^{\rm ad}}^*$, and $k=0,1,\dots,m-1$.
\item \emph{$\infty$-regular} if it is $m$-regular for all $m\in\ZZ_+$.
\end{enumerate}
\end{definition}

\begin{remark}\label{R:higherellbc}
By this definition, an elliptic boundary condition is 1-regular.
\end{remark}

It is clear that if $B$ is an $\infty$-regular boundary condition, then 
\[
\ker D_B\subset C^\infty(M,E),\quad\ker D_{B^{\rm ad}}^*\subset C^\infty(M,E).
\]

\subsection{The Atiyah-Patodi-Singer boundary condition}\label{SS:APSbc}

A typical example of elliptic boundary condition, which is called Atiyah-Patodi-Singer boundary condition (or APS boundary condition), is introduced in \cite{APS}.

Let $D:C^\infty(M,E)\to C^\infty(M,E)$ be a Dirac-type operator. Assume the Riemannian metric and the Clifford module $E$ (with the associated Clifford multiplication and Clifford connection) have product structure near the boundary $\dm$. So $D$ can be written as
\begin{equation}\label{E:prostrofD}
D\;=\;c(\tau)\big(\partial_t+A+R\big)
\end{equation}
in a tubular neighborhood of $\dm$, where $t$ is the normal coordinate, $A$ is an adapted operator to $D$, and $R$ is a zeroth order operator on $\dm$. Then
\[
D^*\;=\;c(\tau)\big(\partial_t+\tilde A+\tilde R\big),
\]
where $\tilde A$ is as in \eqref{E:tildeA}. When $D=D^*$, one can choose $R=\tilde R=0$ so that $A=\tilde A$.

Let $P_{(-\infty,0)}$ be the spectral projection as in \eqref{E:specproj} and set
\[
	H_{(-\infty,0)}^{1/2}(A)\;=\;P_{(-\infty,0)}(H^{1/2}(\dm,E)).
\]

\begin{definition}\label{D:APSbc}
The \emph{Atiyah-Patodi-Singer boundary condition} is
\begin{equation}\label{E:APSbc}
B_{\rm APS}\;:=\;H_{(-\infty,0)}^{1/2}(A).
\end{equation}
This is a closed subspace of $\check{H}(A)$ (recall that the space $\check{H}(A)$ is defined in \eqref{E:checkH}). The adjoint boundary condition is given by
\begin{equation}\label{E:adAPS}
B_{\rm APS}^{\rm ad}\;=\;c(\tau)H_{[0,\infty)}^{1/2}(A)\;=\;H_{(-\infty,0]}^{1/2}(\tilde A).
\end{equation}
\end{definition}

By \cite[Proposition 7.24 and Example 7.27]{BB-1}, we have that

\begin{proposition}\label{P:APS}
The APS boundary condition \eqref{E:APSbc} is an $\infty$-regular boundary condition.
\end{proposition}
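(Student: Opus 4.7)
The plan is to prove $\infty$-regularity in two stages: first verify that $B_{\rm APS}$ is elliptic (i.e.\ $1$-regular), then bootstrap via an inductive boundary estimate built from the product structure of $D$ near $\dm$.

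For the first stage I would invoke Remark \ref{R:ellbc}. By \eqref{E:APSbc} the space $B_{\rm APS} = H^{1/2}_{(-\infty,0)}(A)$ sits inside $H^{1/2}(\dm, E)$ by construction, and by \eqref{E:adAPS} the same is true for $B_{\rm APS}^{\rm ad} = c(\tau) H^{1/2}_{[0,\infty)}(A) = H^{1/2}_{(-\infty,0]}(\tilde A)$. Hence $B_{\rm APS}$ is elliptic, which by Remark \ref{R:higherellbc} is precisely $1$-regularity.

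For the induction, suppose $m$-regularity is known and take $u\in \dom D_B$ with $D_{\max}u = f\in H^m_{\loc}(M,E)$. Interior elliptic regularity for the Dirac-type operator $D$ yields $u\in H^{m+1}_{\loc}$ on every compact subset of $\mathring M$, so matters reduce to a tubular neighborhood $U\cong [0,\varepsilon)\times\dm$ of $\dm$. There \eqref{E:prostrofD} rewrites $Du=f$ as the evolution equation
\[
(\partial_t + A + R)u(t,\cdot) \;=\; -c(\tau)^{-1}f(t,\cdot)
\]
valued in sections over the closed manifold $\dm$. Expanding in the eigenbasis of $A$ supplied by \eqref{E:L2decom}, each mode satisfies $(\partial_t+\lambda_j)u_j(t) = g_j(t)$, where $g_j$ packages the contributions of $R$ and $f$; the APS condition forces $u_j(0)=0$ exactly when $\lambda_j\geq 0$. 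Modes with $\lambda_j\geq 0$ are recovered by forward Duhamel from $t=0$, stable because of the sign of $\lambda_j$; modes with $\lambda_j<0$ are recovered backward from $t=\varepsilon/2$, where the interior induction hypothesis gives control on $u$. Summing the mode-wise bounds in the $H^s$-norms defined by $(\id+A^2)^{s/2}$ produces a boundary estimate
\[
\|u\|_{H^{m+1}(U')} \;\leq\; C\bigl(\|f\|_{H^m(U)} + \|u\|_{H^m(U)}\bigr)
\]
on any smaller collar $U' \Subset U$. Combined with interior regularity this closes the induction step for $u$, and the identical argument applied to $D^*$ with $\tilde A$ in place of $A$ handles $v\in\dom D^*_{B^{\rm ad}}$.

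The main obstacle is justifying the boundary estimate. The conceptually cleanest route is to treat $P_{(-\infty,0)}$ as a classical pseudodifferential projection on $\dm$ whose principal symbol is the spectral projection of $\sigma_A$ onto negative eigenvalues, so that the Shapiro--Lopatinsky condition for the pair $(D, P_{(-\infty,0)})$ holds at every order and yields the required a priori estimates automatically. The mode-by-mode analysis sketched above is the hands-on alternative, avoiding pseudodifferential machinery at the cost of carefully controlling the exponentially (de)stabilizing Duhamel integrals for large $|\lambda_j|$. Either route reduces the proposition to iterating a single boundary estimate, with the bulk of the argument being the verification that the APS projection interacts compatibly with $A$ at every Sobolev level.
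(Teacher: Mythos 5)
Your argument is correct in outline, but it takes a genuinely different route from the paper's: the paper does not prove Proposition \ref{P:APS} at all, it simply quotes \cite[Proposition 7.24 and Example 7.27]{BB-1}, where the $\infty$-regularity of $B_{\rm APS}$ is an instance of the general theory of boundary conditions defined by pseudodifferential projections --- essentially your ``conceptually cleanest'' route, whose nontrivial input is Seeley's theorem that $P_{(-\infty,0)}(A)$ is a classical pseudodifferential operator of order $0$ with principal symbol the spectral projection of $\sigma_A$ onto its negative eigenspaces. Your first stage coincides with what the paper already records: $B_{\rm APS}\subset H^{1/2}(\dm,E)$ and $B_{\rm APS}^{\rm ad}=c(\tau)H_{[0,\infty)}^{1/2}(A)\subset H^{1/2}(\dm,E)$, so ellipticity (hence $1$-regularity) follows from Remark \ref{R:ellbc}. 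Your second stage, the hands-on collar analysis, is essentially the original Atiyah--Patodi--Singer argument and does yield a self-contained proof in the product situation \eqref{E:prostrofD} assumed in Subsection \ref{SS:APSbc}; what it buys is independence from the pseudodifferential machinery, at the cost of the technical points you partly gloss over: (a) the zeroth-order term $R$ does not commute with $A$, so the modes do not actually decouple and $Ru$ must be absorbed into the right-hand side using the inductive hypothesis $u\in H^m_{\loc}$, with commutators of $R$ against powers of $(\id+A^2)^{1/2}$ controlled when estimating tangential derivatives; (b) one must verify that on the collar the $H^{m+1}$-norm is equivalent to the mode-wise norms built from $(\id+A^2)^{s/2}$ and $\partial_t$, which uses the product structure; (c) a cutoff in $t$ is needed to localize, and the trace of $u$ at $t=\varepsilon/2$ used to integrate the negative modes backwards is supplied by interior elliptic regularity together with the trace theorem. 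None of these is a gap in principle --- they are exactly the standard ingredients of the APS regularity proof --- but they constitute the actual content of the ``boundary estimate'' you defer, so a complete write-up should either carry them out in detail or, as the paper does, simply cite \cite{BB-1}.
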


\subsection{Invertibility at infinity}\label{SS:invatinf}

If the manifold $M$ is noncompact without boundary, in general, an elliptic operator on it is not Fredholm. Similarly, for noncompact manifold $M$ with compact boundary, an elliptic boundary condition does not guarantee that the operator is Fredholm. We now define a class of operators on noncompact manifolds which are Fredholm.

\begin{definition}\label{D:invatinf}
We say that an operator $D$ is \emph{invertible at infinity} (or \emph{coercive at infinity}) if there exist a constant $C>0$ and a compact subset $K\Subset M$ such that
\begin{equation}\label{E:invatinf}
\|Du\|_{L^2(M)}\;\ge\;C\|u\|_{L^2(M)},
\end{equation}
for all $u\in C_c^\infty(M,E)$ with ${\rm supp}(u)\cap K=\emptyset$.
\end{definition}

\begin{remark}
(\romannumeral1) By definition, if $M$ is compact, then $D$ is invertible at infinity.

(\romannumeral2) Boundary conditions have nothing to do with invertibility at infinity since the compact set $K$ can always be chosen such that a neighborhood of $\dm$ is contained in $K$.
\end{remark}

An important class of examples for operators which are invertible at infinity is the so-called \emph{Callias-type operators} that will be discussed in next section.

\subsection{Fredholmness}\label{SS:fred}

Recall that for $\dm=\emptyset$, a first-order essentially self-adjoint elliptic operator which is invertible at infinity is Fredholm (cf. \cite[Theorem 2.1]{Anghel93}). For $\dm\ne\emptyset$, we have the following analogous result (\cite[Theorem 8.5, Corollary 8.6]{BB-1}).

\begin{proposition}\label{P:fred}
Assume that $D_B:\dom D_B\to L^2(M,E)$ is a Dirac-type operator with elliptic boundary condition.
\begin{enumerate}
\item\label{I:1} If $D$ is invertible at infinity, then $D_B$ has finite-dimensional kernel and closed range.
\item\label{I:2} If $D$ and $D^*$ are invertible at infinity, then $D_B$ is a Fredholm operator.
\end{enumerate}
\end{proposition}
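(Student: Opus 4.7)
\medskip

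The plan is to establish a semi-Fredholm estimate of the form
\[
\|u\|_{L^2(M)}\;\le\;C\bigl(\|D_Bu\|_{L^2(M)}+\|u\|_{L^2(K')}\bigr),\qquad u\in\dom D_B,
\]
for some compact subset $K'\Subset M$, and then combine it with a compact-embedding argument to deduce the semi-Fredholm conclusion (i). Part (ii) will follow from (i) applied to the adjoint.

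First I would use the invertibility at infinity of $D$ to upgrade \eqref{E:invatinf} to an estimate on $\dom D_B$. Let $K\Subset M$ be the compact set from Definition \ref{D:invatinf}, chosen to contain a collar neighborhood of $\dm$ (which is legitimate by the second part of the remark following the definition). Pick a smooth cutoff $\chi$ with $\chi\equiv 1$ outside a slightly larger compact set $K'$ and $\chi\equiv 0$ on $K$. For $u\in\dom D_B$, $\chi u$ has compact support in $\mathring M$, so by density of $C_\cc^\infty(M,E)$ in the graph norm of $D_{\min}$, the estimate \eqref{E:invatinf} extends to $\chi u$, giving
\[
\|u\|_{L^2(M\setminus K')}\;\le\;\|\chi u\|_{L^2(M)}\;\le\;C_1\bigl(\|D_Bu\|_{L^2(M)}+\|u\|_{L^2(K')}\bigr),
\]
where the commutator $[D,\chi]$ is zeroth order and supported in $K'\setminus K$.

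Next I would handle the compact piece $K'$. Because $B$ is an elliptic boundary condition, $\dom D_B\subset H^1_{\loc}(M,E)$ by Definition \ref{D:ellbc}, and a standard interior plus boundary Gårding inequality for Dirac-type operators with elliptic boundary conditions (e.g.\ \cite[Theorem 7.17]{BB-1}) gives
\[
\|u\|_{H^1(K')}\;\le\;C_2\bigl(\|D_Bu\|_{L^2(M)}+\|u\|_{L^2(K'')}\bigr)
\]
for a slightly larger $K''\Subset M$. Combining with the previous step yields the desired global estimate. From there, the finite-dimensionality of $\ker D_B$ follows because the unit ball of $\ker D_B$ is bounded in $H^1(K'')$ and vanishes on $\|D_Bu\|_{L^2}$, hence is relatively compact in $L^2(K'')$ by Rellich and, via our estimate, in $L^2(M)$. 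Closedness of the range is obtained by the standard functional-analytic argument: on a closed complement of $\ker D_B$ the estimate becomes $\|u\|_{L^2}\le C\|D_Bu\|_{L^2}$ (else a contradictory sequence of unit-norm elements would have an $L^2$-convergent subsequence with zero limit), so $D_B$ is bounded below there, and its range is closed.

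For part (ii), recall from Subsection \ref{SS:bvp} that $(D_B)^{\mathrm{ad}}=D^*_{B^{\mathrm{ad}}}$ and that $B^{\mathrm{ad}}$ is elliptic when $B$ is (Remark \ref{R:ellbc}). Since $D^*$ is invertible at infinity by hypothesis, part (i) applied to $D^*_{B^{\mathrm{ad}}}$ shows that $\ker D^*_{B^{\mathrm{ad}}}$ is finite-dimensional. Because $D_B$ already has closed range, the orthogonal decomposition $L^2(M,E)=\overline{\mathrm{range}\,D_B}\oplus\ker D^*_{B^{\mathrm{ad}}}$ gives $\mathrm{coker}\,D_B\cong\ker D^*_{B^{\mathrm{ad}}}$ finite-dimensional, proving Fredholmness.

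The main obstacle is the boundary Gårding step: promoting \eqref{E:invatinf}, which is phrased for $u\in C_c^\infty(M,E)$ (and thus controls no boundary trace), to a true a priori estimate on the full domain $\dom D_B$ in the presence of only a non-local $\check H(A)$-valued trace. This is exactly where the ellipticity of $B$ — which forces $B\subset H^{1/2}(\dm,E)$ and $\dom D_B\subset H^1_{\loc}(M,E)$ — is indispensable, and where one must invoke the boundary regularity machinery of \cite{BB-1} rather than reprove it from scratch.
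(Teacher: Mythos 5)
Your proposal is correct and follows essentially the same route as the paper, which simply cites \cite[Theorem 8.5, Corollary 8.6]{BB-1} and remarks that, since ellipticity of $B$ gives $\dom D_B\subset H^1_{\loc}(M,E)$, the argument is the standard one from the boundaryless case (cutoff at infinity plus Rellich embedding on a compact piece), with part \ref{I:2} an immediate consequence of part \ref{I:1} applied to the adjoint $D^*_{B^{\rm ad}}$. Your write-up just makes explicit the a priori estimate and the functional-analytic steps that the paper leaves to \cite{BB-1}.
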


\begin{remark}
Since for an elliptic boundary condition $B$, $\dom D_B\subset H_{\loc}^1(M,E)$, the proof is essentially the same as that for the case without boundary (involving Rellich embedding theorem). And it is easy to see that \ref{I:2} is an immediate consequence of \ref{I:1}.
\end{remark}

Under the hypothesis of Proposition \ref{P:fred}.\ref{I:2}, we define the \emph{index} of $D$ subject to the boundary condition $B$ as the integer
\[
\ind D_B\;:=\;\dim\ker D_B-\dim\ker D_{B^{\rm ad}}^*\;\in\;\ZZ.
\]

\subsection{The splitting theorem}\label{SS:splittingthm}

We recall the splitting theorem of \cite{BB-1} which can be thought of as a more general version of \cite[Proposition 2.3]{Raade}. Let $D:C^\infty(M,E)\to C^\infty(M,E)$ be a Dirac-type operator on $M$. Let $N$ be a closed and two-sided hypersurface in $M$ which does not intersect the compact boundary $\dm$. Cut $M$ along $N$ to obtain a manifold $M'$, whose boundary $\dm'$ consists of disjoint union of $\dm$ and two copies $N_1$ and $N_2$ of $N$. One can pull back $E$ and $D$ from $M$ to $M'$ to define the bundle $E'$ and operator $D'$. Then $D':C^\infty(M',E')\to C^\infty(M',E')$ is still a Dirac-type operator. Assume that there is a unit inward normal vector field $\tau$ along $N_1$ and choose an adapted operator $A$ to $D'$ along $N_1$. Then $-A$ is an adapted operator to $D'$ along $N_2$.

\begin{theorem}[\cite{BB-1}, Theorem 8.17]\label{T:splittingthm}
Let $M,D,M',D'$ be as above. 

\begin{enumerate}
\item
$D$ and $D^*$ are invertible at infinity if and only if $D'$ and $(D')^*$ are invertible at infinity. 

\item
Let $B$ be an elliptic boundary condition on $\dm$. Fix $a\in\RR$ and let $B_1= H_{(-\infty,a)}^{1/2}(A)$ and $B_2= H_{[a,\infty)}^{1/2}(A)$ be boundary conditions along $N_1$ and $N_2$, respectively. Then the operators $D_B$ and $D'_{B\oplus B_1\oplus B_2}$ are Fredholm operators and
\[
	\ind D_B\;=\;\ind D'_{B\oplus B_1\oplus B_2}.
\]
\end{enumerate}
\end{theorem}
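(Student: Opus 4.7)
\textbf{Part (1)} (Invertibility at infinity). The idea is excision. Since $N$ is closed (hence compact) and disjoint from $\dm$, it has a relatively compact tubular neighborhood $U\subset M$. Enlarging the compact set $K$ of Definition~\ref{D:invatinf} so that $\overline U\cup\dm\subset K$, any $u\in C_c^\infty(M,E)$ with $\mathrm{supp}(u)\cap K=\emptyset$ identifies canonically with a section of $E'$ on $M'$ supported away from the corresponding set $K'\subset M'$, and $Du$ equals $D'u$ pointwise under this identification. Hence the coercivity estimate \eqref{E:invatinf} holds for $D$ if and only if it holds for $D'$; the same argument handles $D^*$ and $(D')^*$.

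\textbf{Part (2)} (Index equality). From Part~(1) and Proposition~\ref{P:fred}, both $D_B$ and $D'_{B\oplus B_1\oplus B_2}$ are Fredholm once the new boundary condition is elliptic. This is indeed the case, since $B$ is elliptic by hypothesis and each of $B_1=H^{1/2}_{(-\infty,a)}(A)$ and $B_2=H^{1/2}_{[a,\infty)}(A)$ is an APS-type condition on the closed manifold $N_i$, hence $\infty$-regular by Proposition~\ref{P:APS} (after translating the spectral cutoff by $a$).

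To compare the two indices, I introduce on $N_1\sqcup N_2$ the \emph{transmission} boundary condition
\[
\Delta\;:=\;\{(\phi,\phi)\in H^{1/2}(N_1,E'|_{N_1})\oplus H^{1/2}(N_2,E'|_{N_2}):\phi\in H^{1/2}(N,E|_N)\},
\]
using the identifications $E'|_{N_1}\cong E|_N\cong E'|_{N_2}$, and I aim to establish the two-step equality
\[
\ind D_B\;=\;\ind D'_{B\oplus\Delta}\;=\;\ind D'_{B\oplus B_1\oplus B_2}.
\]
The first equality follows from the natural ``cut-and-paste'' identification: a section of $E\to M$ corresponds to a section of $E'\to M'$ whose two traces at $N_1$ and $N_2$ agree, that is, lie in $\Delta$. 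This identification sends $\dom D_B$ bijectively onto $\dom D'_{B\oplus\Delta}$ and intertwines $D$ with $D'$. An application of Green's formula~\eqref{E:greensfor}, combined with $\tau_{N_1}=-\tau_{N_2}$, shows $\Delta^{\mathrm{ad}}=\Delta$, so the same identification works for the formal adjoints and yields $\ker D_B\cong\ker D'_{B\oplus\Delta}$ together with the analogous isomorphism for adjoints; the ellipticity of $\Delta$ itself is immediate from these facts.

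The main obstacle is the second equality, $\ind D'_{B\oplus\Delta}=\ind D'_{B\oplus B_1\oplus B_2}$. The cleanest strategy is to connect the two boundary conditions by a norm-continuous family $\{\mathcal{B}_t\}_{t\in[0,1]}$ of elliptic boundary conditions on $N_1\sqcup N_2$, with $\mathcal{B}_0=\Delta$ and $\mathcal{B}_1=B_1\oplus B_2$, and to invoke the constancy of the Fredholm index along such families. Constructing a genuine path that remains elliptic at every $t$ is the subtle point: both endpoints are closed subspaces of $H^{1/2}(N_1)\oplus H^{1/2}(N_2)$ encoded by spectral data of $A$, so a natural candidate is a straight-line interpolation between the orthogonal projectors onto $\Delta$ and onto $B_1\oplus B_2$, combined with a small perturbation to control the adjoint condition and keep the range inside $H^{1/2}$. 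Invertibility at infinity of $D'$ and $(D')^*$ is boundary-condition-independent by Part~(1), so Proposition~\ref{P:fred} yields Fredholmness of $D'_{B\oplus\mathcal{B}_t}$ uniformly in $t$. Should the homotopy step prove delicate, an alternative is a direct relative-index calculation in which the $N_1$- and $N_2$-contributions cancel precisely because $B_1$ and $B_2$ are complementary spectral halves of the same operator $A$.
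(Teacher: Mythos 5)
You should first note that the paper itself offers no proof of this statement: it is imported verbatim from \cite{BB-1}*{Theorem 8.17}, so your argument has to stand on its own. Part (1), the excision argument, is fine. In Part (2), reducing the problem to the two equalities $\ind D_B=\ind D'_{B\oplus\Delta}=\ind D'_{B\oplus B_1\oplus B_2}$ via the transmission condition $\Delta$ is indeed the right strategy (it is essentially how B\"ar--Ballmann proceed), and the first equality is correct in outline. But even there you are too quick: the ellipticity of $\Delta$ is \emph{not} ``immediate'' from the kernel isomorphisms you exhibit --- ellipticity is a regularity property of the boundary condition and of its adjoint ($\Delta\subset H^{1/2}$, $\dom D'_\Delta\subset H^1_{\loc}$, etc.), and it has to be verified, e.g.\ via the graph characterization of elliptic boundary conditions in \cite{BB-1} (their treatment of the transmission condition); you need this both to apply Proposition \ref{P:fred} to $D'_{B\oplus\Delta}$ and to even speak of deforming $\Delta$ within elliptic conditions.

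The genuine gap is the second equality, which is the whole content of the splitting theorem, and your proposal does not establish it. A straight-line interpolation of the orthogonal projectors onto $\Delta$ and onto $B_1\oplus B_2$ does not produce a family of boundary conditions at all (a convex combination of projectors is not a projector, its image need not be closed, need not lie in $H^{1/2}$, and nothing controls the adjoint condition), so ellipticity along the path is not merely ``subtle'' --- the path is not constructed. Moreover, nothing in this paper provides the stability statement ``the index is constant along a continuous family of elliptic boundary conditions''; that deformation theorem would itself have to be proved or correctly quoted from \cite{BB-1}. The workable route is to use the spectral splitting $H^{1/2}(N,E_N)=H^{1/2}_{(-\infty,a)}(A)\oplus H^{1/2}_{[a,\infty)}(A)$ on each copy $N_1,N_2$ and connect the two conditions through graph-type subspaces, for instance
\begin{equation*}
\mathcal{B}_s\;=\;\bigl\{\bigl(\phi_-+s\,\psi_+\,,\;s\,\phi_-+\psi_+\bigr)\;:\;\phi_-\in H^{1/2}_{(-\infty,a)}(A),\ \psi_+\in H^{1/2}_{[a,\infty)}(A)\bigr\},
\end{equation*}
which equals $B_1\oplus B_2$ at $s=0$ and $\Delta$ at $s=1$ and stays elliptic because each $\mathcal{B}_s$ is the graph of a bounded map between the APS half-spaces; one then needs the deformation/stability results of \cite{BB-1} to conclude index constancy. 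Finally, your fallback --- ``a direct relative-index calculation in which the $N_1$- and $N_2$-contributions cancel'' --- is circular in the logical structure of this paper, since the relative index theorem (Theorem \ref{T:rit}) is itself deduced from the splitting theorem.
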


\subsection{Relative index theorem}\label{SS:rit}

Let $M_j$, $j=1,2$ be two {complete} manifolds
with compact boundary and $D_{j,B_j}:\dom D_{j,B_j}\to L^2(M_j,E_j)$ be two Dirac-type operators with elliptic boundary conditions. Suppose $M_j'\cup_{N_j}M_j''$ are partitions of $M_j$ into relatively open submanifolds, where $N_j$ are closed hypersurfaces of $M_j$ that do not intersect the boundaries. We assume that $N_j$ have tubular neighborhoods which are diffeomorphic to each other and the structures of $E_j$ (resp. $D_j$) on the neighborhoods are isomorphic.

Cut  $M_j$ along $N_j$ and glue the pieces together interchanging $M_1''$ and $M_2''$. In this way we obtain the manifolds 
\[
M_3\ :=\ M_1'\cup_N M_2'',\qquad\qquad M_4\ :=\ M_2'\cup_N M_1'',
\]
where $N\cong N_1\cong N_2$. Then we get operators $D_{3,B_3}$ and $D_{4,B_4}$ on $M_3$ and $M_4$, respectively. The following relative index theorem, which generalizes \cite[Theorem 8.19]{BB-1}, is a direct consequence of Theorem \ref{T:splittingthm}. (One can see \cite[Theorem 1.2]{Bunke} for a boundaryless version.)

\begin{theorem}\label{T:rit}
If $D_j$ and $D_j^*$, $j=1,2,3,4$ are all invertible at infinity, then $D_{j,B_j}$ are all Fredholm operators, and
\[
\ind D_{1,B_1}+\ind D_{2,B_2}\;=\;\ind D_{3,B_3}+\ind D_{4,B_4}.
\]
\end{theorem}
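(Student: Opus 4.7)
The plan is to derive Theorem \ref{T:rit} by applying the splitting theorem (Theorem \ref{T:splittingthm}) four times---once to each of the manifolds $M_1$, $M_2$, $M_3$, $M_4$ along the relevant interior hypersurface---and then observing that the resulting sums of piece-wise indices on the right-hand sides coincide after a rearrangement. Because the tubular neighborhoods of $N_1$ and $N_2$ are assumed isomorphic (together with the bundles and Dirac-type operators), one can pick a single adapted operator $A$ along $N \cong N_1 \cong N_2$, a single $a \in \RR$, and a single pair of spectral boundary conditions $B_- := H^{1/2}_{(-\infty,a)}(A)$ and $B_+ := H^{1/2}_{[a,\infty)}(A)$ which serve on the $N$-sides of every cut.

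Write each elliptic boundary condition $B_j$ ($j=1,2$) as $B_j' \oplus B_j''$ according to the decomposition $\partial M_j = (\partial M_j \cap M_j') \sqcup (\partial M_j \cap M_j'')$; this is meaningful since $N_j$ is disjoint from $\partial M_j$. Applying Theorem \ref{T:splittingthm} to $M_j$ along $N_j$ and using additivity of the index over disjoint unions gives
\[
\ind D_{j, B_j} \;=\; \ind D_{M_j',\, B_j' \oplus B_-} \,+\, \ind D_{M_j'',\, B_j'' \oplus B_+}, \qquad j = 1, 2.
\]
By the construction of $M_3, M_4$ the natural elliptic boundary conditions are $B_3 = B_1' \oplus B_2''$ and $B_4 = B_2' \oplus B_1''$, and the same splitting procedure applied to $M_3$ and $M_4$ along the common $N$ yields
\[
\ind D_{3, B_3} \;=\; \ind D_{M_1',\, B_1' \oplus B_-} \,+\, \ind D_{M_2'',\, B_2'' \oplus B_+},
\]
\[
\ind D_{4, B_4} \;=\; \ind D_{M_2',\, B_2' \oplus B_-} \,+\, \ind D_{M_1'',\, B_1'' \oplus B_+}.
\]
Adding the two identities in each group, the right-hand sides reduce to the same four summands, and the desired equality follows.

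The main point that needs care is verifying that all eight piece-wise operators appearing above are genuinely Fredholm, since only then is the splitting theorem applicable and the indices well-defined. Ellipticity of the spectral conditions $B_\pm$ is standard APS-type material, and a direct sum of elliptic boundary conditions is elliptic. Invertibility at infinity on each piece follows from part (i) of Theorem \ref{T:splittingthm}, which we invoke in both directions: first, cutting $M_j$ along $N_j$ to transfer invertibility at infinity of $D_j, D_j^*$ to each of the pieces $M_j', M_j''$; and second, reassembling those pieces across $N$ to recover invertibility at infinity of $D_3, D_3^*, D_4, D_4^*$ (which is hypothesized anyway). Once these points are in place the argument is essentially bookkeeping.
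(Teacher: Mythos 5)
Your proposal is correct and follows essentially the same route as the paper: apply the splitting theorem to each of $M_1,M_2,M_3,M_4$ along the common hypersurface with matched spectral conditions $H^{1/2}_{(-\infty,a)}(A)$ and $H^{1/2}_{[a,\infty)}(A)$, identify the resulting piece-wise indices, and add. Your extra remarks on Fredholmness of the pieces and on transferring invertibility at infinity via part (i) of the splitting theorem just make explicit what the paper leaves implicit.
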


\begin{proof}
Clearly the hypersurfaces $N_j$ satisfy the hypothesis of Theorem \ref{T:splittingthm}. As in last subsection, choose boundary conditions $B_{N_j}'$ and $B_{N_j}''$ along $N_j$ on $M_j'$ and $M_j''$, respectively. Since $D_j$ and $D_j^*$ are invertible at infinity, from Theorem \ref{T:splittingthm},
\[
\ind D_{j,B_j}\;=\;\ind D_{j,B_j'\oplus B_{N_j}'}'+\ind D_{j,B_j''\oplus B_{N_j}''}'',\qquad j=1,2,
\]
where $B_j'$ and $B_j''$ are the restrictions of the boundary condition $B_j$ to $M_j'$ and $M_j''$, respectively. By the construction of $M_3$ and $M_4$,
\[
\ind D_{3,B_3}\;=\;\ind D_{1,B_1'\oplus B_{N_1}'}'+\ind D_{2,B_2''\oplus B_{N_2}''}'',
\]
\[
\ind D_{4,B_4}\;=\;\ind D_{2,B_2'\oplus B_{N_2}'}'+\ind D_{1,B_1''\oplus B_{N_1}''}''.
\]
Adding together, the theorem is proved.
\end{proof}

\section{Callias-type operators with APS boundary conditions}\label{S:APS-Callias}

\subsection{Callias-type operators}\label{SS:Calliasopr}

Let $M$ be a complete \emph{odd-dimensional} Riemannian manifold with boundary $\dm$. Suppose that $E$ is a Clifford module over $M$. Let $D:C^\infty(M,E)\to C^\infty(M,E)$ be a formally self-adjoint Dirac-type operator. Suppose $\Phi\in{\rm End}(E)$ is a self-adjoint bundle map (called \emph{Callias potential}). Then $\DD:=D+i\Phi$ is again a Dirac-type operator on $E$ with formal adjoint given by
\[
\DD^*\;=\;D-i\Phi.
\]
So
\begin{equation}\label{E:Calliassq}
\begin{aligned}
\DD^*\DD\;&=\;D^2+\Phi^2+i[D,\Phi], \\
\DD\DD^*\;&=\;D^2+\Phi^2-i[D,\Phi],
\end{aligned}
\end{equation}
where
\[
	[D,\Phi]\;:=\;D\Phi\;-\;\Phi D
\]
is the commutator of the operators $D$ and $\Phi$.

\begin{definition}\label{D:Calliasopr}
We say that $\DD$ is a \emph{Callias-type operator} if
\begin{enumerate}
\item\label{I:1} $[D,\Phi]$ is a zeroth order differential operator, i.e. a bundle map;
\item\label{I:2} there exist a compact subset $K\Subset M$ and a constant $c>0$ such that
\[
	\Phi^2(x)\;-\;|[D,\Phi](x)|\;\ge\;c
\]
for all $x\in M\setminus K$. Here $|[D,\Phi](x)|$ denotes the operator norm of the linear map $[D,\Phi](x):E_x\to E_x$. In this case, the compact set $K$ is called an \emph{essential support} of $\DD$.
\end{enumerate}
\end{definition}

\begin{remark}\label{R:Calliasequiv}
$\DD$ is a Callias-type operator if and only if $\DD^*$ is.
\end{remark}

\begin{proposition}\label{P:Calliasinv}
Callias-type operators are invertible at infinity in the sense of Definition \ref{D:invatinf}.
\end{proposition}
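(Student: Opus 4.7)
The plan is to derive the invertibility-at-infinity estimate from a Bochner-type identity provided by the factorization~\eqref{E:Calliassq}, using the pointwise Callias positivity condition. First I would enlarge the essential support $K$ of $\DD$ to a compact set $K' \Subset M$ that additionally contains a collar neighborhood of $\dm$. This enlargement does not affect the conclusion thanks to the remark following Definition~\ref{D:invatinf}, and it ensures that every $u \in C_c^\infty(M,E)$ with $\mathrm{supp}(u) \cap K' = \emptyset$ actually has compact support in $\mathring{M}$. Consequently, the Green-type integration by parts needed to compute $(u,\DD^*\DD u)$ produces no boundary contribution.

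For such $u$, using $\|\DD u\|_{L^2(M)}^2 = (u,\DD^*\DD u)$ together with~\eqref{E:Calliassq} gives
\[
\|\DD u\|_{L^2(M)}^2 \;=\; \|Du\|_{L^2(M)}^2 \;+\; \|\Phi u\|_{L^2(M)}^2 \;+\; \bigl(u, i[D,\Phi]u\bigr).
\]
The next step is to observe that $i[D,\Phi]$ is a fiberwise self-adjoint bundle map (since $D^* = D$ and $\Phi^* = \Phi$ force $[D,\Phi]^* = -[D,\Phi]$), so the cross-term is real and bounded in absolute value by $\int_M |[D,\Phi](x)|\, |u(x)|^2\, dV$. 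Reading the Callias condition Definition~\ref{D:Calliasopr}.\ref{I:2} as the pointwise operator inequality $\Phi^2(x) \geq \bigl(c + |[D,\Phi](x)|\bigr)\,\id$ on $M \setminus K$ yields the lower bound $\|\Phi u\|_{L^2(M)}^2 \geq \int_M (c + |[D,\Phi](x)|)|u|^2\, dV$. Dropping the nonnegative $\|Du\|_{L^2(M)}^2$ contribution and combining, the $|[D,\Phi]|$ terms cancel, leaving $\|\DD u\|_{L^2(M)}^2 \geq c \|u\|_{L^2(M)}^2$, which is exactly~\eqref{E:invatinf} with $C = \sqrt{c}$.

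The only genuine subtlety lies in the first step: were $u$ allowed to touch $\dm$, the identity $\|\DD u\|^2 = (u,\DD^*\DD u)$ would acquire a boundary term via Proposition~\ref{P:greensfor}, and one would have to bring in a boundary condition to control it. Enlarging $K'$ to swallow a collar of $\dm$ bypasses that issue, after which the argument reduces to a purely pointwise Bochner calculation driven by the sign structure of the Callias hypothesis.
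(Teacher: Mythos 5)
Your proof is correct and follows essentially the same route as the paper: enlarge the essential support to contain a neighborhood of the compact boundary so that admissible test sections lie in $C_\cc^\infty(M,E)$ and Green's formula yields no boundary term, then use \eqref{E:Calliassq} together with the pointwise Callias positivity to bound $\|\DD u\|_{L^2(M)}^2\ge c\|u\|_{L^2(M)}^2$. The only cosmetic difference is that you split off $\|\Phi u\|^2$ and the cross-term $(u,i[D,\Phi]u)$ and recombine them, whereas the paper estimates the single bundle map $\Phi^2+i[D,\Phi]$ from below by $c$ directly.
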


\begin{proof}
Since $\dm$ is compact, we can always assume that the essential support $K$ contains a neighborhood of $\dm$. Thus for all $u\in C_c^\infty(M,E)$ with ${\rm supp}(u)\cap K=\emptyset$, $u\in C_{cc}^\infty(M,E)$. Then by Proposition \ref{P:greensfor}, \eqref{E:Calliassq}, and Definition \ref{D:Calliasopr},
\[
\begin{aligned}
\|\DD u\|_{L^2(M)}^2&\;=\;(\DD u,\DD u)_{L^2(M)}\;=\;(\DD^*\DD u,u)_{L^2(M)} \\
&\;=\;(D^2u,u)_{L^2(M)}\;+\;((\Phi^2+i[D,\Phi])u,u)_{L^2(M)} \\
&\;\ge\;\|Du\|_{L^2(M)}^2\;+\;c\|u\|_{L^2(M)}^2 \\
&\;\ge\;c\|u\|_{L^2(M)}^2.
\end{aligned}
\]
Therefore $\|\DD u\|_{L^2(M)}\ge\sqrt{c}\|u\|_{L^2(M)}$ and $\DD$ is invertible at infinity.
\end{proof}

\begin{remark}\label{R:Calliasopr}
When $\dm=\emptyset$, $\DD$ has a unique closed extension to $L^2(M,E)$, and it is a Fredholm operator. Thus one can define its $L^2$-index,
\[
\ind\DD\;:=\;\dim\{u\in L^2(M,E):\DD u=0\}-\dim\{u\in L^2(M,E):\DD^*u=0\}.
\]
A seminal result says that this index is equal to the index of a Dirac-type operator (the operator $\partial_+^+$ of \eqref{E:hypersurfaceopr}) on a compact hypersurface outside of the essential support. This was first proved by Callias in \cite{Callias78} for Euclidean space (see also \cite{BottSeeley78}) and was later generalized to manifolds in \cite{Anghel93-2}, \cite{Raade}, \cite{Bunke}, etc. In \cite{BrShi} and \cite{BrCecchini16}, the relationship between such result and cobordism invariance of the index was being discussed for usual and von Neumann algebra cases, respectively.
\end{remark}

\begin{remark}
If $\dm\ne\emptyset$, then in general, $\DD$ is not Fredholm. By Proposition \ref{P:fred}, we need an elliptic boundary condition in order to have a well-defined index and study it.
\end{remark}

\subsection{The APS boundary condition for Callias-type operators}\label{SS:APS4Callias}

We impose the APS boundary condition as discussed in Subsection \ref{SS:APSbc} that enables us to define the index for Callias-type operators.

As in Subsection \ref{SS:APSbc}, we assume the product structure \eqref{E:prostrofD} for $D$ near $\dm$. We also assume that $\Phi$ does not depend on $t$ near $\dm$. Then near $\dm$,
\[
\DD\;=\; c(\tau)\big(\partial_t+A-ic(\tau)\Phi\big)\;=\;c(\tau)\big(\partial_t+\AAA\big),
\]
where $\AAA:=A-ic(\tau)\Phi$ is still formally self-adjoint and thus is an adapted operator to $\DD$. 

Replacing $D$ and $A$ in Subsection \ref{SS:APSbc} by $\DD$ and $\AAA$, we define the APS boundary condition $B_{\rm APS}$ as in \eqref{E:APSbc} for the Callias-type operator $\DD$. It is an elliptic boundary condition. Combining Proposition \ref{P:fred}, Remark \ref{R:Calliasequiv} and Proposition \ref{P:Calliasinv}, we obtain the Fredholmness for the operator $\DD_{B_{\rm APS}}$.

\begin{proposition}\label{P:Calliasfred}
The operator $\DD_{B_{\rm APS}}:\dom\DD_{B_{\rm APS}}\to L^2(M,E)$ is Fredholm, thus has an index
\[
\ind\DD_{B_{\rm APS}}\;=\;\dim\ker\DD_{B_{\rm APS}}-\dim\ker\DD_{B_{\rm APS}^{\rm ad}}^*\;\in\;\ZZ.
\]
\end{proposition}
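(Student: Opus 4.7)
The proof will be a direct assembly of three ingredients already established in the excerpt: ellipticity of the APS boundary condition, invertibility at infinity of $\DD$ and $\DD^*$, and the general Fredholm criterion for Dirac-type operators with elliptic boundary conditions.

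First I would verify that the hypotheses of Proposition \ref{P:fred}.\ref{I:2} are all in place for $\DD_{B_{\rm APS}}$. Note that $\DD = D + i\Phi$ is a Dirac-type operator with the same principal symbol as $D$, so the definition of APS boundary condition from Subsection \ref{SS:APSbc} applies verbatim provided we use an adapted operator to $\DD$ rather than to $D$. The computation preceding the statement identifies such an adapted operator as $\AAA = A - ic(\tau)\Phi$, which is formally self-adjoint on $\dm$ because $A$ is self-adjoint and $ic(\tau)\Phi$ is self-adjoint (since $c(\tau)$ is skew-Hermitian and $\Phi$ is self-adjoint). Hence $\AAA$ has discrete real spectrum on the closed manifold $\dm$, so the spectral projection $P_{(-\infty,0)}$ and the space $H_{(-\infty,0)}^{1/2}(\AAA)$ are defined, and $B_{\rm APS} = H_{(-\infty,0)}^{1/2}(\AAA)$ is an $\infty$-regular (in particular elliptic) boundary condition by Proposition \ref{P:APS}.

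Second, Proposition \ref{P:Calliasinv} gives that $\DD$ is invertible at infinity, and by Remark \ref{R:Calliasequiv} the formal adjoint $\DD^* = D - i\Phi$ is also of Callias type, so applying Proposition \ref{P:Calliasinv} to $\DD^*$ gives that it too is invertible at infinity. With ellipticity of $B_{\rm APS}$ and invertibility at infinity of both $\DD$ and $\DD^*$, Proposition \ref{P:fred}.\ref{I:2} immediately yields that $\DD_{B_{\rm APS}}:\dom\DD_{B_{\rm APS}}\to L^2(M,E)$ is Fredholm. The index formula is then just the standard definition $\ind\DD_{B_{\rm APS}} = \dim\ker\DD_{B_{\rm APS}} - \dim\ker\DD_{B_{\rm APS}^{\rm ad}}^*$, where the adjoint operator is identified from Subsection \ref{SS:bvp}.

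There is essentially no obstacle here since every building block has been quoted already; the only point requiring a line of verification is that $\AAA$ is genuinely formally self-adjoint (so that the APS construction applies to $\DD$), and that the essential support in Definition \ref{D:Calliasopr} can be enlarged to contain a neighborhood of $\dm$ without affecting anything (already used in the proof of Proposition \ref{P:Calliasinv}). Everything else is bookkeeping.
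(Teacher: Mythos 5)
Your argument is correct and is essentially the paper's own: the paper obtains Fredholmness by exactly the same assembly, namely the ellipticity ($\infty$-regularity) of the APS condition for $\DD$ defined via the adapted operator $\AAA=A-ic(\tau)\Phi$ (Proposition \ref{P:APS}), invertibility at infinity of $\DD$ and of $\DD^*$ (Proposition \ref{P:Calliasinv} together with Remark \ref{R:Calliasequiv}), and then Proposition \ref{P:fred}, part (ii). One minor point of hygiene: the formal self-adjointness of $-ic(\tau)\Phi$ requires not only that $c(\tau)$ is skew-Hermitian and $\Phi$ self-adjoint, but also that $\Phi$ commutes with $c(\tau)$, which follows from condition (i) of Definition \ref{D:Calliasopr} (the commutator $[D,\Phi]$ being a bundle map forces $\Phi$ to commute with Clifford multiplication).
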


\subsection{The APS-Callias index theorem}\label{SS:Calliasindthm}

We now formulate the main result of this paper -- a Callias-type index theorem for operators with APS boundary conditions.

By Definition \ref{D:Calliasopr}, the Callias potential $\Phi$ is nonsingular outside of the essential support $K$. Then over $M\setminus K$, there is a bundle decomposition 
\[
	E|_{M\setminus K}\;=\;E_+\oplus E_-,
\] where $E_\pm$ are the positive/negative eigenspaces of $\Phi$. Since Definition \ref{D:Calliasopr}.\ref{I:1} implies that $\Phi$ commutes with Clifford multiplication, $E_\pm$ are also Clifford modules.

Let $L\Subset M$ be a compact subset of $M$ containing the essential support $K$ such that $(K\setminus\dm)\subset\mathring{L}$. Suppose that $\partial L=\dm\sqcup N$, where $N$ is a closed hypersurface partitioning $M$. Denote 
\[
	E_N\;:=\;E|_N,\quad E_{N\pm}\;:=\;E_\pm|_N.
\] 
The restriction of the Clifford multiplication on $E_\pm$ defines a Clifford multiplication $c_N(\cdot)$ on $E_{N\pm}$. Let $\nabla^{E_N}$ be the restriction of the connection $\nabla^E$ on $E$. In general, $\nabla^{E_N}$ does not preserve the decomposition $E_N=E_{N+}\oplus E_{N-}$. However, if we define
\[
\nabla^{E_{N\pm}}\;:=\;{\rm Proj}_{E_{N\pm}}\circ\nabla^{E_N},
\]
where ${\rm Proj}_{E_{N\pm}}$ are the projections onto $T^*N\otimes E_{N\pm}$. One can check that these are Hermitian connections on $E_{N\pm}$ (cf. \cite[Lemma 2.7]{Anghel90}).
Then $E_{N\pm}$ are Clifford modules over $N$, and we define the (formally self-adjoint) Dirac operators on $E_{N\pm}$ by
\[
\partial_\pm\;:=\;\sum_{j=1}^{n-1}c_N(e_j)\nabla_{e_j}^{E_{N\pm}}
\]
at $x\in N$, where $e_1,\dots,e_{n-1}$ is an orthonormal basis of $T_xN$. They can be seen as adapted operators associated to $D_\pm:=D|_{E_\pm}$.

Let $\tau_N$ be a unit inward (with respect to $L$) normal vector field on $N$ and set 
\[
	\nu\;:=\;ic(\tau_N).
\] 
Since $\nu^2=\id$, $\nu$ induces a grading on $E_{N\pm}$
\[
	E_{N\pm}^\pm\;=\;\{u\in E_{N\pm}:\nu u=\pm u\},
\]
It's easy to see from \eqref{E:anticommuting} that $\partial_\pm$ anti-commute with $\nu$. We denote by $\partial_\pm^\pm$ the restrictions of $\partial_\pm$ to $E_{N\pm}^\pm$. Then
\begin{equation}\label{E:hypersurfaceopr}
\partial_\pm^\pm\;:\;C^\infty(N,E_{N\pm}^\pm)\;\to\;C^\infty(N,E_{N\pm}^\mp).
\end{equation}

As mentioned in Remark \ref{R:Calliasopr}, when $\dm=\emptyset$, the classical Callias index theorem asserts that
\[
\ind\DD\;=\;\ind\partial_+^+.
\]
The following theorem generalizes this result to the case of manifolds with boundary.

\begin{theorem}\label{T:Calliasindthm}
Let $\DD=D+i\Phi:C^\infty(M,E)\to C^\infty(M,E)$ be a Callias-type operator on an odd-dimensional complete manifold $M$ with compact boundary $\dm$. Let $B_{\rm APS}$ be the APS boundary condition described in Subsection \ref{SS:APS4Callias}. Then
\begin{equation}\label{E:Calliasindthm}
\ind\DD_{B_{\rm APS}}\;=\;\frac{1}{2}(\ind\partial_+^+-\ind\partial_-^+)-\tilde\eta(\AAA),
\end{equation}
where $\partial_\pm^+:C^\infty(N,E_{N\pm}^+)\to C^\infty(N,E_{N\pm}^-)$ are the Dirac-type operators on the closed manifold $N$,
\begin{equation}\label{E:fulletainv}
\tilde\eta(\AAA)\;:=\;\frac{1}{2}(\dim\ker\AAA+\eta(0;\AAA)),
\end{equation}
and the $\eta$-function $\eta(s;\AAA)$ is defined by
\[
\eta(s;\AAA)\;:=\;\sum_{\lambda\in{\rm spec}(\AAA)\setminus\{0\}}{\rm sign}(\lambda)|\lambda|^{-s}.
\]
\end{theorem}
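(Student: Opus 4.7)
The plan is to prove Theorem~\ref{T:Calliasindthm} by splitting $M$ along the separating hypersurface $N$ via Theorem~\ref{T:splittingthm} and computing the index contribution from each of the two resulting pieces. Let $\AAA_N$ denote an adapted operator to $\DD$ along $N$, which exists by Remark~\ref{R:adaptedopr}. Applying the splitting theorem with $a = 0$ gives
\[
\ind \DD_{B_{\rm APS}} \;=\; \ind \DD^L_{B_{\rm APS}\oplus B_1} \;+\; \ind \DD^{M_2}_{B_2},
\]
where $L \Subset M$ is the compact piece containing $\dm$ and the essential support, $M_2 := M \setminus \mathring{L}$ is the noncompact complement (on which $\Phi$ is invertible), and $B_1 = H^{1/2}_{(-\infty,0)}(\AAA_N)$, $B_2 = H^{1/2}_{[0,\infty)}(\AAA_N)$ are the complementary spectral projections along $N$.

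For the compact piece $L$, the operator $\DD^L$ is a Dirac-type operator on a compact odd-dimensional manifold with boundary $\dm \sqcup N$, carrying APS-type conditions on both components. In odd dimension the classical APS index theorem for Dirac-type operators reduces to boundary $\eta$-contributions, producing
\[
\ind \DD^L_{B_{\rm APS}\oplus B_1} \;=\; -\tilde\eta(\AAA) \;+\; \tilde\eta(\AAA_N),
\]
the sign on the second term reflecting that $L$'s inward normal at $N$ is opposite to the one used to define $\AAA_N$.

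For the noncompact piece $M_2$, since $\Phi$ is invertible throughout $M_2$, I would reduce the computation to a translationally invariant cylindrical model $N \times \RR_{\ge 0}$ on which the Callias structure is independent of the axial coordinate; this reduction is justified by the relative index theorem (Theorem~\ref{T:rit}) applied to a comparison between $M_2$ and such a model cylinder that agree on a collar of $N$. On the cylinder, separation of variables converts the index problem into a spectral analysis of $\AAA_N = A_N - ic(\tau_N)\Phi$. Using the Clifford-module splitting $E_N = E_{N+} \oplus E_{N-}$ together with the grading $\nu = ic(\tau_N)$ and the induced decomposition $E_{N\pm} = E_{N\pm}^+ \oplus E_{N\pm}^-$, a careful block analysis of the spectrum of $\AAA_N$ identifies its spectral asymmetry and kernel in terms of the Dirac-type operators $\partial_\pm^+$, yielding
\[
\ind \DD^{M_2}_{B_2} \;=\; \tfrac{1}{2}\bigl(\ind \partial_+^+ - \ind \partial_-^+\bigr) \;-\; \tilde\eta(\AAA_N).
\]

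Adding the two contributions, the $\tilde\eta(\AAA_N)$ terms cancel and the claimed formula~\eqref{E:Calliasindthm} follows. The main obstacle is the cylinder analysis on $M_2$: extracting both the topological factor $\tfrac{1}{2}(\ind\partial_+^+ - \ind\partial_-^+)$ and the analytic correction $\tilde\eta(\AAA_N)$ from the same spectral computation requires delicate bookkeeping, since $\AAA_N$ is not block-diagonal with respect to $E_{N+} \oplus E_{N-}$. A natural technique is to deform $\Phi \mapsto s\Phi$ and send $s \to \infty$ (a Witten-style deformation), in which limit the spectrum of $\AAA_N$ approximately separates into eigenblocks dominated by $\pm s|\Phi|$ within each $E_{N\pm}^\pm$-summand, with the Dirac indices $\partial_\pm^+$ emerging from the subleading corrections.
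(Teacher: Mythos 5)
Your overall strategy (split $M$ along $N$, apply the APS theorem on the compact piece, and extract the $\partial_\pm^+$ indices from a spectral analysis of $\AAA_N$ on a cylindrical model) is the same as the paper's, but as written there are two genuine gaps. First, you apply \cite[Theorem 3.10]{APS} on $L$ and pass to a translation-invariant cylinder model for $M\setminus L$ without first arranging product structure near $N$: the APS formula requires $\DD=c(\tau_N)(\partial_t+\AAA_N)$ with $\AAA_N$ independent of $t$ in a collar of $N$, and the identification $\AAA_N|_{E_{N\pm}}=\partial_\pm\mp ic(\tau_N)h$ with a \emph{constant} $h$ (which your later spectral analysis needs) requires deforming the metric, the Clifford data, the potential $V$, and $\Phi$ near $N$. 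This is the content of Lemma \ref{L:deformation}, and one must also prove that this deformation does not change $\ind\DD_{B_{\rm APS}}$ (Proposition \ref{P:preservingind}); your proposal omits both. The largeness of $h$ is also what makes the noncompact/cylindrical piece tractable: in the paper its index is shown to be \emph{zero} by a Weitzenb\"ock estimate valid for $h$ large (Lemma \ref{L:indonL}), whereas you assign to that piece the value $\tfrac12(\ind\partial_+^+-\ind\partial_-^+)-\tilde\eta(\AAA_N)$ and to $L$ the value $-\tilde\eta(\AAA)+\tilde\eta(\AAA_N)$; with your own definition $\AAA_N=A_N-ic(\tau_N)\Phi$ these signs are inconsistent (the APS formula on $L$ gives $-\tilde\eta(\AAA_N)-\tilde\eta(\AAA)$ when $\AAA_N$ is taken with respect to the $L$-inward normal, and the entire term $\tfrac12(\ind\partial_+^+-\ind\partial_-^+)$ arises as $-\tilde\eta(\AAA_N)$), so the bookkeeping of which piece carries which contribution needs to be fixed before the two summands can be added.

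Second, the heart of the theorem --- the identity $\eta(0;\AAA_N)=-\ind\partial_+^+ +\ind\partial_-^+$ together with $\ker\AAA_N=0$ --- is asserted (``a careful block analysis \dots yields'') but not proved, and the Witten-style deformation $\Phi\mapsto s\Phi$, $s\to\infty$, you propose is only heuristic: rescaling $\Phi$ changes $\AAA$ on $\dm$ and the spectrum of $\AAA_N$, so neither the boundary condition nor the $\eta$-terms are obviously constant in $s$, and spectral flow through $0$ can change the quantities you are tracking. The paper's argument is exact rather than asymptotic: writing $\AAA_{N+}=\begin{bmatrix}-h&\partial_+^-\\ \partial_+^+&h\end{bmatrix}$ in the grading by $\nu$, the map $u^+\oplus u^-\mapsto(\lambda+h)u^+\oplus(-(\lambda-h)u^-)$ intertwines the $\lambda$- and $(-\lambda)$-eigenspaces for $|\lambda|>h$, so the spectrum outside $[-h,h]$ is symmetric and $\eta(0;\AAA_{N+})=\dim\ker(\AAA_{N+}-h)-\dim\ker(\AAA_{N+}+h)=\dim\ker\partial_+^- -\dim\ker\partial_+^+=-\ind\partial_+^+$, with the analogous computation on $E_{N-}$; moreover $\AAA_N^2=\partial^2+h^2$ forces $\ker\AAA_N=0$. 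Some such exact argument (or a rigorous substitute) must be supplied; without it, and without the deformation step above, the proposal does not yet constitute a proof.
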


\begin{remark}\label{R:etainv}
Since $\dm$ is a closed manifold, $\eta(s;\AAA)$ converges absolutely for $\operatorname{Re}(s)$ large. Then $\eta(0;\AAA)$ can be defined using meromorphic continuation of $\eta(s;\AAA)$ and we call it \emph{$\eta$-invariant} for $\AAA$ on $\dm$. Note that $\dm$ is an even-dimensional manifold. In general, the $\eta$-invariant on even-dimensional manifolds is much simpler than on odd-dimensional ones. We refer the reader to \cite{Gilkey85} for details.
\end{remark}

Theorem \ref{T:Calliasindthm} will be proved in the next section. The main idea of the proof is as follows. Recall that we have chosen a compact subset $L$ of $M$ containing the essential support of $\DD$ with boundary $\partial L=\dm\sqcup N$. First use Theorems \ref{T:rit} and \ref{T:splittingthm} to transfer the index we want to find to an index on $L$ with APS boundary condition. Then by APS index formula \cite[Theorem 3.10]{APS} and dimension reason, we get
\[
\ind\DD_{B_{\rm APS}}\;=\;-\tilde\eta(\AAA_N)-\tilde\eta(\AAA).
\]
Then the proof is completed by a careful study of the $\eta$-invariant $\eta(0;\AAA_N)$.

\subsection{Connection between Theorem \ref{T:Calliasindthm} and the usual Callias index theorem}\label{SS:connection}

Consider the special case when $\dm=\emptyset$. Clearly, $\tilde\eta(\AAA)$ vanishes. Since $N=\partial L$ now, by cobordism invariance of the index (see for example \cite[Chapter \upper{\romannumeral17}]{Palais} or \cite{Br-cobinv}),
\[
0\;=\;\ind\partial^+\;=\;\ind\partial_+^+\;+\;\ind\partial_-^+.
\]
Hence $\ind\partial_+^+=-\ind\partial_-^+$, and \eqref{E:Calliasindthm} becomes
\[
\ind\DD\;=\;\ind\partial_+^+,
\]
which is exactly the usual Callias index theorem. Therefore, our Theorem \ref{T:Calliasindthm}  can be seen as a generalization of the Callias index theorem to manifolds with boundary. In particular, we give a new proof of the Callias index theorem for manifolds without boundary.

\subsection{An asymmetry result}\label{SS:asymetry}

{One can see from \eqref{E:APSbc} and \eqref{E:adAPS} that the APS boundary condition $B_{\rm APS}$ involves spectral projection onto $(-\infty,0)$, while its adjoint boundary condition $B_{\rm APS}^{\rm ad}$ involves spectral projection onto a slightly different interval $(-\infty,0]$. This shows that Atiyah-Patodi-Singer boundary condition is not symmetric.
When the manifold $M$ is compact, this asymmetry can be expressed in terms of the kernel of the adapted operator (cf. \cite[pp. 58-60]{APS}). For our Callias-type operator on noncompact manifold, a similar result still holds. To avoid confusion of notations, we use $D+i\Phi$ for $\DD$ and $D-i\Phi$ for $\DD^*$.

\begin{corollary}\label{C:asymmetry}
Under the same hypothesis as in Theorem \ref{T:Calliasindthm},
\[
\ind(D+i\Phi)_{B_{\rm APS}}+\ind(D-i\Phi)_{B_{\rm APS}}\;=\;-\dim\ker\AAA.
\]
\end{corollary}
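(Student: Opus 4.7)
The plan is to apply Theorem \ref{T:Calliasindthm} twice, once to $\DD = D+i\Phi$ and once to $\DD^* = D-i\Phi$, and then sum the two resulting formulas. Viewing $D-i\Phi = D + i(-\Phi)$ as a Callias-type operator with Callias potential $-\Phi$, its positive and negative eigenbundles on $M\setminus K$ are the $E_\mp$ of the original decomposition, so the hypersurface Dirac operators get interchanged, $(\partial_+)' = \partial_-$ and $(\partial_-)' = \partial_+$. The adapted operator is now $\tilde{\AAA} = A + ic(\tau)\Phi$, obtained by rewriting $\DD^* = c(\tau)(\partial_t + \tilde{\AAA})$ near $\dm$. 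Thus Theorem \ref{T:Calliasindthm} yields
\begin{align*}
\ind(D+i\Phi)_{B_{\rm APS}} &\;=\; \tfrac12\bigl(\ind\partial_+^+ - \ind\partial_-^+\bigr) - \tilde\eta(\AAA), \\
\ind(D-i\Phi)_{B_{\rm APS}} &\;=\; \tfrac12\bigl(\ind\partial_-^+ - \ind\partial_+^+\bigr) - \tilde\eta(\tilde\AAA),
\end{align*}
and the hypersurface contributions cancel upon addition, leaving
\[
\ind(D+i\Phi)_{B_{\rm APS}} + \ind(D-i\Phi)_{B_{\rm APS}} \;=\; -\tilde\eta(\AAA) - \tilde\eta(\tilde\AAA).
\]

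It remains to show $\tilde\eta(\AAA) + \tilde\eta(\tilde\AAA) = \dim\ker\AAA$. The key input is the relation (analogous to \eqref{E:tildeA})
\[
\tilde\AAA \;=\; c(\tau)\circ(-\AAA)\circ c(\tau)^{-1},
\]
which follows by direct computation from $c(\tau)^2=-\id$, the anti-commutation \eqref{E:anticommuting}, and the fact that $\Phi$ commutes with Clifford multiplication (Definition \ref{D:Calliasopr}.\ref{I:1}). Consequently, Clifford multiplication by $c(\tau)$ induces a bijection from the $\lambda$-eigenspace of $\AAA$ onto the $(-\lambda)$-eigenspace of $\tilde\AAA$ for every $\lambda\in{\rm spec}(\AAA)$. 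In particular $\dim\ker\tilde\AAA = \dim\ker\AAA$, and the $\eta$-series satisfies $\eta(s;\tilde\AAA) = -\eta(s;\AAA)$ for $\operatorname{Re}(s)$ large, so by meromorphic continuation $\eta(0;\tilde\AAA) = -\eta(0;\AAA)$. Substituting into \eqref{E:fulletainv}, the $\eta$-invariants cancel and
\[
\tilde\eta(\AAA) + \tilde\eta(\tilde\AAA) \;=\; \tfrac12\bigl(\dim\ker\AAA + \dim\ker\tilde\AAA\bigr) \;=\; \dim\ker\AAA.
\]

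Combining the two displays proves the corollary. There is no serious obstacle here; the only mildly technical point is the bookkeeping for the swap $\partial_+^+ \leftrightarrow \partial_-^+$ when passing from the Callias potential $\Phi$ to $-\Phi$, which amounts to verifying that the induced Clifford module structures, connections, and gradings on $N$ used to construct $\partial_\pm^+$ in Subsection \ref{SS:Calliasindthm} are genuinely exchanged (with no extra sign), an immediate consequence of the definitions.
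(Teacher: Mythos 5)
Your argument is correct and follows essentially the same route as the paper: apply Theorem \ref{T:Calliasindthm} to both $D+i\Phi$ and $D-i\Phi$ (noting the swap of $\partial_+^+$ and $\partial_-^+$), then use the relation $\tilde\AAA\circ c(\tau)=-c(\tau)\circ\AAA$ to pair the spectra of $\AAA$ and $\tilde\AAA$, so that the $\eta$-invariants cancel and the kernels match, giving $\tilde\eta(\AAA)+\tilde\eta(\tilde\AAA)=\dim\ker\AAA$. The only cosmetic difference is that you phrase the key identity as the conjugation $\tilde\AAA=c(\tau)\circ(-\AAA)\circ c(\tau)^{-1}$, which is equivalent to the anticommutation relation the paper invokes.
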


\begin{proof}
Recall that $D+i\Phi$ and $D-i\Phi$ can be written as
\[
\begin{aligned}
D+i\Phi&\;=\;c(\tau)\big(\partial_t+\AAA\big), \\
D-i\Phi&\;=\;c(\tau)\big(\partial_t+\tilde\AAA\big),
\end{aligned}
\]
where the adapted operators $\AAA$ and $\tilde\AAA$ satisfy
\begin{equation}\label{E:anticommuting1}
\tilde\AAA\circ c(\tau)\;=\;-c(\tau)\circ\AAA.
\end{equation}

Apply Theorem \ref{T:Calliasindthm} to $D+i\Phi$ and $D-i\Phi$. Notice that $\partial_+^+$ and $\partial_-^+$ are interchanged for these two Callias-type operators, so we have
\[
\begin{aligned}
\ind(D+i\Phi)_{B_{\rm APS}}&\;=\;\frac{1}{2}(\ind\partial_+^+-\ind\partial_-^+)-\tilde\eta(\AAA), \\
\ind(D-i\Phi)_{B_{\rm APS}}&\;=\;\frac{1}{2}(\ind\partial_-^+-\ind\partial_+^+)-\tilde\eta(\tilde\AAA).
\end{aligned}
\]
Add them up and it suffices to show that
\[
\tilde\eta(\AAA)\,+\,\tilde\eta(\tilde\AAA)\;=\;\dim\ker\AAA.
\]

By \eqref{E:anticommuting1}, the map $c(\tau)$ sends eigensections of $\AAA$ associated with eigenvalue $\lambda_j$ to eigensections of $\tilde\AAA$ associated with eigenvalue $-\lambda_j$ bijectively and vice versa. In particular, it induces an isomorphism between the kernel of $\AAA$ and that of $\tilde\AAA$. So
\[
\eta(0;\AAA)+\eta(0;\tilde\AAA)=0\qquad\mbox{and}\qquad\dim\ker\AAA+\dim\ker\tilde\AAA=2\dim\ker\AAA.
\]
Now the corollary follows from \eqref{E:fulletainv}.
\end{proof}

Notice, that if $\dm=\emptyset$ then Corollary~\ref{C:asymmetry} implies a well known result (cf. for example, \cite[(2.10)]{BrCecchini16})  
\[
	\ind(D+i\Phi) \;= \;-\ind(D-i\Phi).
\]

\section{Proof of Theorem \ref{T:Calliasindthm}}\label{S:proof}

We prove Theorem \ref{T:Calliasindthm} following the idea sketched in Subsection \ref{SS:Calliasindthm}. To simplify notations, we will write $\ind\DD$ for $\ind\DD_{B_{\rm APS}}$ in this section.

\subsection{Deformation of structures near $N$}\label{SS:deformation}

Remember we assumed that $(K\setminus\dm)\subset\mathring{L}$, so there exists a relatively compact neighborhood $U(N)$ of $N$ which does not intersect with the essential support $K$. Out first step is to do deformation on $U(N)$. The following lemma is from \cite[Section 6]{BrCecchini16}.

\begin{lemma}\label{L:deformation}
Set $N_\delta:=N\times(-\delta,\delta)$ for any $\delta>0$. One can deform all the structures in the neighborhood $U(N)$ of $N$ so that the following conditions are satisfied:
\begin{enumerate}
\item\label{I:1} $U(N)$ is isometric to $N_{2\varepsilon}$;
\item\label{I:2} {\rm (see also \cite[Lemma 5.3]{BrCecchini16})} the restrictions of the Clifford modules $E|_{N_\varepsilon}$ and $E_\pm|_{N_\varepsilon}$ are isomorphic to the pull backs of $E_N$ and $E_{N\pm}$ to $N_\varepsilon$ respectively along with connections;
\item\label{I:3} {\rm (see also \cite[Lemma 5.4]{BrCecchini16})} $\Phi|_{N_\varepsilon}$ is a constant multiple of its unitarization $\Phi_0:=\Phi(\Phi^2)^{-1/2}$, i.e., $\Phi|_{E_\pm}=\pm h$ on $N_\varepsilon$, where $h>0$ is a constant;
\item\label{I:4} the potential $V$  from \eqref{E:Diracopr} of the Dirac-type operator $D$ vanishes on $N_\varepsilon$;
\item\label{I:5} $\DD$ is always a Callias-type operator throughout the deformation, and the essential support of the Callias-type operator associated to the new structures is still contained in $L\setminus(N\times(-\varepsilon,0])$.
\end{enumerate}
\end{lemma}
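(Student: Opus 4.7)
The plan is to perform the required modifications as a sequence of localized deformations, each supported strictly inside $U(N)$, so that the Callias-type operator and its essential support outside $U(N)$ are untouched. Since $U(N)\cap K=\emptyset$, on $U(N)$ we already have $\Phi^2\ge c$ and $\Phi$ is a nonsingular self-adjoint bundle map, which is what makes each step possible without destroying the Callias condition.

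First, I would apply the tubular neighborhood theorem to $N$, shrinking if necessary so that a neighborhood diffeomorphic to $N\times(-2\varepsilon,2\varepsilon)$ sits inside $U(N)$, and then use a cutoff in the $t$-direction to interpolate between the original metric and the product metric $g_N+dt^2$; this yields~\ref{I:1}. Next, under the product identification, parallel transport along the $t$-lines with respect to the Clifford connection trivializes $E|_{N_{2\varepsilon}}$ as $\pi^*E_N$; because $\Phi$ commutes with Clifford multiplication and therefore with the Clifford part of the connection, this trivialization respects the decomposition $E=E_+\oplus E_-$, and a further cutoff-interpolation of connections gives~\ref{I:2}. For~\ref{I:3}, I would use the polar decomposition $\Phi=h\Phi_0$ with $h=(\Phi^2)^{1/2}$ well-defined and smooth on $N_\varepsilon$, then deform $h$ via a cutoff so that it equals a positive constant on a slightly smaller band while agreeing with the original $h$ near the edge of $U(N)$. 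Finally, once the metric, Clifford module, and connection are in product form, $D$ decomposes as $c(\tau)(\partial_t+A+R)$ near $N$; the zeroth-order term $R$ (together with any lower-order part $V$ from~\eqref{E:Diracopr}) can be scaled to zero by a cutoff, producing~\ref{I:4}.

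The main obstacle is verifying~\ref{I:5}: the condition that $\Phi^2-|[D,\Phi]|\ge c'>0$ must be preserved throughout every stage of the deformation, and the growth of the essential support must be controlled so it never encroaches on $N\times(-\varepsilon,0]$. The delicate point is that although $\Phi^2$ stays bounded below by a constant on $U(N)$ (since its values are being interpolated between two nonsingular self-adjoint bundle maps whose squares are uniformly positive), the commutator $[D,\Phi]$ picks up derivatives of the cutoff functions in both the connection deformation and the $h$-deformation. The key check is that after step~\ref{I:3}, $\Phi$ is locally constant on the final inner sub-band, so $[D,\Phi]=0$ there, while on the transition annulus one chooses the cutoff slopes small compared to the positive lower bound of $\Phi^2$ (which is possible because $h$ is bounded away from $0$ on $U(N)$). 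Ordering the four deformations as above and performing each via a one-parameter family $\Phi_s$, $D_s$ with uniformly controlled derivatives in $s$, one obtains a continuous family of Callias-type operators whose essential support stays within $L\setminus(N\times(-\varepsilon,0])$, establishing~\ref{I:5}. Steps~\ref{I:2} and~\ref{I:3} are exactly the content of \cite[Lemmas~5.3, 5.4]{BrCecchini16}, so the new work is mainly organizing the deformation in a way compatible with the boundary $\partial M$ (which poses no difficulty since $U(N)$ is disjoint from $\partial M$).
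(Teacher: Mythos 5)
Your overall strategy (product metric via a tubular neighborhood, trivialization of the Clifford module along the normal direction, polar decomposition $\Phi=h\Phi_0$ with $h$ pushed to a constant by a cutoff, killing the zeroth-order terms) is the same deformation scheme as the source the paper relies on for this lemma (\cite[Section 6, Lemmas 5.3, 5.4]{BrCecchini16}); the paper itself gives no independent proof. But two points in your sketch do not hold as stated. First, the claim that parallel transport along the $t$-lines respects $E=E_+\oplus E_-$ ``because $\Phi$ commutes with Clifford multiplication'' is a non sequitur: preservation of the eigenbundles under parallel transport is governed by $\nabla^E\Phi$, and in general $\nabla^E$ does \emph{not} preserve the splitting --- this is exactly why the paper introduces the projected connections $\nabla^{E_{N\pm}}={\rm Proj}_{E_{N\pm}}\circ\nabla^{E_N}$. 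To get \ref{I:2} you must transport $E_\pm$ separately with the projected connections (or build the isomorphism as in \cite[Lemma 5.3]{BrCecchini16}) and deform the connection to a direct-sum connection on the band; the resulting change of $D$ by a bundle map then has to be fed back into the estimate for \ref{I:5}.

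Second, and more seriously, your verification of \ref{I:5} does not go through. The essential support of the deformed operator must stay inside $L$ (this is what is used later: the glued operator $\DD_2$ on $(N\times(-\infty,0])\cup_N(M\setminus L)$ must have \emph{empty} essential support), so the estimate $\Phi^2-|[D,\Phi]|\ge c'>0$ has to survive on the transition annulus lying on the $M\setminus L$ side of the tube. There the interpolation from the original $\Phi$ to the constant $\pm h$ must be completed within a collar of \emph{fixed} width $\varepsilon$, so the normal-derivative contribution to $[D,\Phi]$ is of size roughly $|h-|\Phi||/\varepsilon$; you cannot ``choose the cutoff slopes small compared to the lower bound of $\Phi^2$'', because near the outer edge of the annulus $\Phi^2$ is still only of the original size while the slope is dictated by $h$ and $\varepsilon$. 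This is fatal precisely in the regime the paper needs: Remark \ref{R:deformation2} and the proof of Lemma \ref{L:indonL} require the freedom to take $h$ arbitrarily large, and then the slope term swamps $\Phi^2$ on that annulus for any fixed profile. The missing ingredient is an additional deformation that creates room --- e.g.\ stretching the metric on $U(N)$ in the normal direction (allowed, since the metric is among the deformed structures and \ref{I:1} does not fix $\varepsilon$), which simultaneously lengthens the transition region and shrinks the norm of $d\Phi$ in the new metric, or an equivalent device from \cite[Section 6]{BrCecchini16}. Without such a step, your argument establishes \ref{I:3}--\ref{I:5} only for $h$ bounded in terms of $\varepsilon$ and the original lower bound of $|\Phi|$, which is weaker than what the paper actually uses.
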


\begin{remark}\label{R:deformation1}
As a result of \ref{I:1} and \ref{I:2}, we can write
\[
\begin{aligned}
D|_{N_\varepsilon}&\;=\;c(\tau_N)(\partial_t+\partial), \\
D_\pm|_{N_\varepsilon}&\;=\;c(\tau_N)(\partial_t+\partial_\pm),
\end{aligned}
\]
where $t$ is the normal coordinate pointing inward $L$ and $\partial,\partial_\pm$ are as in Subsection \ref{SS:Calliasindthm}. Furthermore, by \ref{I:3}, our Callias-type operator has the form
\begin{equation}\label{E:product}
\DD|_{N_\varepsilon}\;=\;c(\tau_N)(\partial_t+\AAA_N),
\end{equation}
where $\AAA_N=\partial-ic(\tau_N)\Phi|_N$ does not depend on $t$ and 
\begin{equation}\label{E:adaponN}
\AAA_N|_{E_{N\pm}}\;=\;\partial_\pm\mp ic(\tau_N)h.
\end{equation}
It's also easy to see from \ref{I:3} that
\begin{equation}\label{E:defCalliaspotential}
\Phi^2\;=\;h^2\quad\mbox{and}\quad[D,\Phi]|_{E_\pm}\;=\;[D,\pm h]\;=\;0
\end{equation}
on $N_\varepsilon$.
\end{remark}

\begin{remark}\label{R:deformation2}
Below in Lemma \ref{L:indonL} we use the freedom to choose $h$ in \ref{I:3} to be arbitrarily large.
\end{remark}

\begin{proposition}\label{P:preservingind}
The deformation in Lemma \ref{L:deformation} preserves the index of the Callias-type operator $\DD=D+i\Phi$ under APS boundary condition.
\end{proposition}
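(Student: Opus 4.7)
Parametrize the deformation of Lemma \ref{L:deformation} as a continuous family $\{\DD_s\}_{s \in [0,1]}$ with $\DD_0$ the original operator $\DD$ and $\DD_1$ the fully deformed one. The last condition in Lemma \ref{L:deformation} guarantees that each $\DD_s$ is a Callias-type operator on $M$, so by Proposition \ref{P:Calliasfred} every $\DD_{s, B_{\rm APS}}$ is Fredholm. Moreover, as noted in the proof of Proposition \ref{P:Calliasinv}, we may arrange for the essential support $K$ to contain a neighborhood of $\dm$, so that the deformation region $U(N)\subset M\setminus K$ is disjoint from $\dm$. Consequently the adapted operator $\AAA$ on $\dm$---and with it the APS boundary condition $B_{\rm APS}$---is fixed throughout the deformation.

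The strategy is to localize the $s$-dependence to a compact region by applying the splitting theorem twice. Choose two closed, two-sided hypersurfaces $N^-$ and $N^+$ in $M \setminus \overline{U(N)}$, parallel to $N$ on opposite sides, such that the compact region $L_0$ bounded by $N^- \sqcup N^+$ contains $\overline{U(N)}$. The hypersurface $N^-$ separates off a compact piece $L^-$ with boundary $\dm \sqcup N^-$ (containing $\dm$), and $N^+$ separates off a non-compact piece $L^+$ with boundary $N^+$. Applying Theorem \ref{T:splittingthm} to both cuts yields
\[
\ind \DD_{s, B_{\rm APS}}\;=\;\ind \DD^{L^-}_s\;+\;\ind \DD^{L_0}_s\;+\;\ind \DD^{L^+}_s,
\]
where each summand is equipped with the APS-type boundary conditions $H^{1/2}_{(-\infty,a)}$ and $H^{1/2}_{[a,\infty)}$ on the two copies of each cut hypersurface, together with the original $B_{\rm APS}$ on $\dm$ for $L^-$. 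Because $N^\pm$ lie outside $U(N)$, the adapted operators along $N^\pm$---and hence all of these new boundary conditions---are independent of $s$.

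Since the deformation is supported inside $U(N) \subset L_0$, the operators $\DD^{L^-}_s$ and $\DD^{L^+}_s$ together with their boundary conditions are entirely $s$-independent, so their indices are constant. On the compact manifold $L_0$ with fixed elliptic boundary conditions, $\{\DD^{L_0}_s\}$ is a smooth family of first-order elliptic operators; by the standard invariance of the Fredholm index under continuous deformations of elliptic boundary value problems on compact manifolds, $\ind \DD^{L_0}_s$ is constant in $s$ as well. Summing the three contributions proves the proposition. The main subtle point is the index stability on the compact piece $L_0$, but since $L_0$ is compact and the boundary condition fixed, this reduces to a classical perturbation-theoretic fact about elliptic boundary value problems.
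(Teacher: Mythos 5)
Your argument is correct, but it takes a different route from the paper. The paper's own proof works directly on the noncompact manifold $M$: since the deformation is supported in the compact set $\overline{U(N)}$, which is disjoint from $\dm$, the adapted operator $\AAA$, the APS boundary condition, and hence the domain $\dom\DD_{B_{\rm APS}}$ are all unchanged, and the deformation yields a continuous family of bounded Fredholm operators from this fixed domain (with graph norm) to $L^2(M,E)$; constancy of the index then follows from the classical stability of the Fredholm index (cf.\ \cite[Proposition \upper{\romannumeral3}.7.1]{LM}). You instead localize: two applications of Theorem \ref{T:splittingthm} along $s$-independent cuts $N^\pm$ split $\ind\DD_{s,B_{\rm APS}}$ into contributions from $L^-$, $L_0$, $L^+$, of which the outer two are literally $s$-independent, and you then invoke index stability only for elliptic boundary value problems on the compact piece $L_0$. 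This is legitimate (each $\DD_s$ is Callias-type by condition (v) of Lemma \ref{L:deformation}, so the splitting theorem applies for every $s$, and the boundary conditions along the copies of $N^\pm$ are fixed since the structures near $N^\pm$ do not move), and it is in the spirit of the alternative proof sketched in Remark \ref{R:preservingind}, though you use the splitting theorem plus compact stability rather than the relative index theorem plus odd-dimensionality. What your version buys is that the perturbation-theoretic input is reduced to the textbook compact case; what it costs is the extra geometric setup (choosing $N^\pm$ so that the region between them is compact and contains $\overline{U(N)}$, which is easiest if $U(N)$ is taken to be a tubular neighborhood of $N$ from the start) and two invocations of the splitting theorem for each $s$. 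Note also that the stability fact you use on $L_0$ is exactly the one the paper applies on $M$ itself, which is possible there because the domain is fixed on all of $M$, not just on the compact piece — so the detour, while sound, is not strictly needed.
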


\begin{proof}
Let $W$ be the closure of $U(N)$. It is a compact subset of $M$ which does not intersect the boundary $\dm$. This indicates that $\DD$ keeps unchanged near the boundary and one can impose the same APS boundary condition. Since the deformation only occurs on the compact set $W$ and is continuous, the domain of $\DD$ remains the same under APS boundary condition. Therefore, throughout the deformation, $\DD$ is always a bounded operator from this fixed domain to $L^2(M,E)$ which is Fredholm. Now by the stability of the Fredholm index (cf. \cite[Proposition \upper{\romannumeral3}.7.1]{LM}), the index of $\DD$ is preserved.
\end{proof}

\begin{remark}\label{R:preservingind}
One can also show Proposition \ref{P:preservingind} by using relative index theorem and the fact that the dimension is odd.
\end{remark}

Proposition \ref{P:preservingind} ensures that we can make the following assumption.

\begin{assumption}\label{A:deformation}
We assume that conditions \ref{I:1}-\ref{I:5} of Lemma \ref{L:deformation} are satisfied for our problem henceforth.
\end{assumption}

\subsection{The index on manifold with a cylindrical end}\label{SS:cylinder}

Note that $N$ gives a partition $M=L\cup_N(M\setminus L)$. Consider $M_1=N\times (-\infty,\infty)$ with the partition
\[
	M_1 \;=\;(N\times(-\infty,0])\cup_N(N\times(0,\infty)).
\] 
Lift the Clifford module $E_N$, Dirac operator $\partial$ and restriction of bundle map $\Phi|_N$ from $N$ to $M_1$. By Assumption \ref{A:deformation}, there are isomorphisms between structures of $M$ near $N$ and those of $M_1$ near $N\times\{0\}$. One can do the ``cut-and-glue'' procedure as described in Subsection \ref{SS:rit} to form
\begin{equation}\label{E:cutandglue}
\hat{M}\;=\;L\cup_N(N\times(0,\infty)),\qquad M_2\;=\;(N\times(-\infty,0])\cup_N(M\setminus L).
\end{equation}
We obtain Callias-type operators $\DD,\DD_1,\hDD,\DD_2$ acting on $E,E_1,\hat{E},E_2$ over corresponding manifold. They satisfy Theorem \ref{T:rit}. Therefore
\[
	\ind\DD+\ind\DD_1\;=\;\ind\hDD+\ind\DD_2.
\]
Notice that $\DD_1$ and $\DD_2$ are Callias-type operators with empty essential supports on manifolds without boundary. Therefore, $\DD_1$, $\DD_2$ and their adjoints are invertible operators. So $\ind\DD_1=\ind\DD_2=0$ and we get
\begin{lemma}\label{L:indD=indDhat}
$\ind\DD\;=\;\ind\hDD$.
\end{lemma}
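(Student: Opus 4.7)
The plan is to combine the relative index theorem with the observation that $\DD_1$ and $\DD_2$ are invertible, hence have vanishing index. First I would apply Theorem \ref{T:rit} to the two partitions $M=L\cup_N(M\setminus L)$ and $M_1=(N\times(-\infty,0])\cup_N(N\times(0,\infty))$ along the common hypersurface $N$: the hypotheses of the relative index theorem are met because the tubular neighborhoods of $N$ in $M$ and of $N\times\{0\}$ in $M_1$ carry isomorphic Clifford/Dirac/potential data thanks to the deformation in Lemma \ref{L:deformation}, and $N$ misses the compact boundary $\dm$ since $\partial L=\dm\sqcup N$. The cut-and-glue then produces $\hat M$ and $M_2$ as in \eqref{E:cutandglue}, together with the identity
\[
	\ind\DD\;+\;\ind\DD_1\;=\;\ind\hDD\;+\;\ind\DD_2.
\]

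The heart of the argument is then to verify that $\ind\DD_1=\ind\DD_2=0$. On the complete boundaryless manifold $M_1=N\times\RR$ the product form \eqref{E:product} combined with \eqref{E:defCalliaspotential} gives $\Phi_1^2\equiv h^2$ and $[D_1,\Phi_1]\equiv 0$ everywhere, so the essential support of $\DD_1$ is empty and the Green's-formula calculation from the proof of Proposition \ref{P:Calliasinv} yields the global estimate $\|\DD_1u\|_{L^2}\ge h\|u\|_{L^2}$ for every $u\in C_c^\infty(M_1,E_1)$. Because $M_1$ has no boundary, the unique closed extension of $\DD_1$ (Remark \ref{R:Calliasopr}) is the closure of $\DD_1|_{C_c^\infty}$, so this estimate extends by density to the whole domain; thus $\DD_1$ is injective with closed range. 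The same argument applied to $\DD_1^*$, which is of Callias type by Remark \ref{R:Calliasequiv}, shows $\ker\DD_1^*=0$, and so $\DD_1$ is invertible and $\ind\DD_1=0$. The treatment of $\DD_2$ on $M_2=(N\times(-\infty,0])\cup_N(M\setminus L)$ is identical: the boundary $\dm$ was removed with $L$, the Callias condition on the cylindrical end is again immediate from the product computation, and on $M\setminus L$ it holds because $L$ already contains the original essential support $K$. Plugging $\ind\DD_1=\ind\DD_2=0$ into the displayed identity yields $\ind\DD=\ind\hDD$.

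The only nontrivial point I anticipate is the density step extending the $L^2$-invertibility estimate from $C_c^\infty$ to the maximal domain on the two complete boundaryless manifolds. This is standard via the essential uniqueness $\DD_{1,\min}=\DD_{1,\max}$ recalled in Remark \ref{R:Calliasopr}, but it is the only step beyond the bookkeeping afforded by Theorem \ref{T:rit} that deserves explicit mention.
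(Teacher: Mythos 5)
Your proposal is correct and follows essentially the same route as the paper: apply the relative index theorem (Theorem \ref{T:rit}) to the partitions $M=L\cup_N(M\setminus L)$ and $M_1=(N\times(-\infty,0])\cup_N(N\times(0,\infty))$, then observe that $\DD_1$ and $\DD_2$ are Callias-type operators with empty essential support on boundaryless manifolds, hence invertible with vanishing index. Your extra detail on the density/essential-uniqueness step justifying invertibility is exactly what the paper leaves implicit in its remark that $\DD_1$, $\DD_2$ and their adjoints are invertible.
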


\begin{remark}\label{R:cylindricalend}
Now the problem is moved to $\hat{M}$, a manifold with a cylindrical end. We point out that conditions \ref{I:2}-\ref{I:4} of Lemma \ref{L:deformation} continue holding on the cylindrical end.
\end{remark}

\subsection{Applying the splitting theorem}\label{SS:appsplittingthm}

We have the partition of $\hat{M}$ as in \eqref{E:cutandglue} and $\hDD$ is of form \eqref{E:product} near $N$. Cut $\hat{M}$ along $N$. Define boundary condition on $L$ along $N$ to be the APS boundary condition $H_{(-\infty,0)}^{1/2}(\AAA_N)$, and the boundary condition on $N\times[0,\infty)$ along $N$ to be $H_{[0,\infty)}^{1/2}(\AAA_N)$. Denote by $\hDD_1$ and $\hDD_2$ the restrictions of $\hDD$ to $L$ and $N\times[0,\infty)$, respectively. Let $\ind\hDD_1$ be the index of $\hDD_1$ with APS boundary condition and $\ind\hDD_2$ be the index of $\hDD_2$ with boundary condition $H_{[0,\infty)}^{1/2}(\AAA_N)$. Then by Theorem \ref{T:splittingthm},
\[
\ind\hDD\;=\;\ind\hDD_1+\ind\hDD_2.
\]

\begin{lemma}\label{L:indonL}
$\ind\hDD\;=\;\ind\hDD_1$.
\end{lemma}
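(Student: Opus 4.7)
The plan is to invoke the splitting identity $\ind\hDD = \ind\hDD_1 + \ind\hDD_2$ displayed just above the lemma, so that the claim reduces to showing $\ind\hDD_2 = 0$. Both $\hDD_2$ with boundary condition $B_2 = H_{[0,\infty)}^{1/2}(\AAA_N)$ and its formal adjoint are Fredholm by Proposition \ref{P:fred}: the essential support of the Callias-type operator sits inside $L$, so $\hDD_2$ is automatically invertible at infinity on the cylinder, and $B_2$ is elliptic. It therefore suffices to prove that $\ker\hDD_2 = \{0\}$ and $\ker\hDD_2^* = \{0\}$.

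For both kernels I would separate variables on the half-cylinder, using the product form guaranteed by Assumption \ref{A:deformation} and Remark \ref{R:deformation1}. Let $r \geq 0$ be the coordinate on $N \times [0,\infty)$ with $r = 0$ identified with $N$. Because the inward normal to the cylinder at its boundary is opposite to $\tau_N$, matching across $N$ with the $L$-side expression $\DD = c(\tau_N)(\partial_t + \AAA_N)$ yields
\[
\hDD_2 \;=\; c(\partial_r)(\partial_r - \AAA_N),
\]
consistent with Theorem \ref{T:splittingthm}, which labels the adapted operator along the cylinder boundary as $-\AAA_N$. If $u \in \ker\hDD_2$, expanding in the eigenbasis $\{\phi_\lambda\}$ of $\AAA_N$ gives $u(r,x) = \sum_\lambda c_\lambda\, e^{\lambda r}\,\phi_\lambda(x)$; square-integrability on $[0,\infty)$ forces $c_\lambda = 0$ for $\lambda \geq 0$, so the trace $u|_{r=0}$ has spectral support in $(-\infty,0)$. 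The boundary condition $B_2$ forces it into $[0,\infty)$. These spectral subspaces intersect trivially, so $u \equiv 0$.

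For the adjoint, the analogous product expression is $\hDD_2^* = c(\partial_r)(\partial_r - \tilde\AAA_N)$, and the intertwining $c(\partial_r)\AAA_N = -\tilde\AAA_N c(\partial_r)$ (a consequence of $\AAA_N c(\tau_N) = -c(\tau_N)\tilde\AAA_N$) together with the general rule $B^{\rm ad} = c(\nu) H^{1/2}_{I^c}(\AAA_N)$ for APS-type conditions identifies
\[
B_2^{\rm ad}\;=\;c(\partial_r)\,H_{(-\infty,0)}^{1/2}(\AAA_N)\;=\;H_{(0,\infty)}^{1/2}(\tilde\AAA_N).
\]
The same dichotomy applies: $L^2$-decay places the trace in the negative spectral part of $\tilde\AAA_N$, while $B_2^{\rm ad}$ places it in the strictly positive part, forcing $\ker\hDD_2^* = \{0\}$.

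The main piece of real work is bookkeeping the sign conventions --- identifying the adapted operator $-\AAA_N$ and the adjoint boundary condition on the cylinder, and confirming that the hybrid-space trace coming from $\dom\hDD_{2,\max}$ is compatible with the $H^{1/2}$-regularity demanded by $B_2$. Once these conventions are fixed, the spectral argument is elementary: on a half-infinite product cylinder, the $L^2$ and boundary conditions cut out complementary spectral subspaces of the adapted operator, so the index contribution from the cylindrical piece is forced to vanish.
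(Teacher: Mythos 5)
Your proposal is correct, but it proves the key fact $\ind\hDD_2=0$ by a genuinely different mechanism than the paper. Both arguments start from the splitting identity $\ind\hDD=\ind\hDD_1+\ind\hDD_2$; the paper then kills $\ind\hDD_2$ by a coercivity estimate: it applies Green's formula and the Weitzenb\"ock identity $\hat D^2=\hat\nabla^*\hat\nabla+\hat{\mathcal{R}}$ on the half-cylinder, uses the boundary condition to make the spectral boundary term $\int_N\langle\AAA_N u,u\rangle\,dS$ nonnegative, and invokes the freedom of Remark \ref{R:deformation2} to choose the constant $h$ so large that $h^2/2$ dominates $\sup|\hat{\mathcal{R}}|$, yielding $\|\hDD_2u\|_{L^2}^2\ge \tfrac{h^2}{2}\|u\|_{L^2}^2$ and the analogous bound for the adjoint, so that $\hDD_2$ is invertible. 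You instead exploit the exact translation invariance of $\hDD_2=c(\partial_r)(\partial_r-\AAA_N)$ on $N\times[0,\infty)$ (guaranteed by Assumption \ref{A:deformation} and Remark \ref{R:cylindricalend}) and run the classical APS half-cylinder argument: an $L^2$ solution has boundary trace in the strictly negative spectral subspace of $\AAA_N$ (resp.\ of $\tilde\AAA_N$ for the adjoint), which meets the imposed condition $H^{1/2}_{[0,\infty)}(\AAA_N)$ (resp.\ $H^{1/2}_{(0,\infty)}(\tilde\AAA_N)$) only in $\{0\}$; your sign conventions are consistent with Theorem \ref{T:splittingthm}, where $-\AAA_N$ is the adapted operator along the cylinder copy of $N$, and the Fredholmness you cite is indeed already supplied by that theorem. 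The trade-off: your route needs neither the curvature bound nor a large $h$ (it works for any constant $h$, and does not even use invertibility of $\AAA_N$), but it leans on the exact $r$-independence of the operator on the end, whereas the paper's Bochner-type estimate is an integration-by-parts argument that would tolerate perturbations of the product structure as long as the pointwise conditions of Lemma \ref{L:deformation} and a bounded curvature term persist. The only detail worth writing out in your version is the standard justification of the mode-by-mode expansion for a maximal-domain solution (project onto each eigenspace of $\AAA_N$ and solve the resulting ODE in the distributional sense, using the $\infty$-regularity of the boundary condition for smoothness); with that included, your proof is complete.
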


\begin{proof}
We need to prove that $\ind\hDD_2=0$. Remember that $\hDD_2=\hat{D}+i\hat\Phi$ satisfies conditions of Lemma \ref{L:deformation} on $N\times[0,\infty)$. For any $u\in C_c^\infty(N\times[0,\infty),\hat{E})$ satisfying $u|_N\in H_{[0,\infty)}^{1/2}(\AAA_N)$, by Proposition \ref{P:greensfor}, \eqref{E:product}, \eqref{E:defCalliaspotential} and Remark \ref{R:cylindricalend},
\[
\begin{aligned}
\|\hDD_2u\|_{L^2}^2&\;=\;(\hDD_2u,\hDD_2u)_{L^2} \\
&\;=\;(\hDD_2^*\hDD_2u,u)_{L^2}-\int_N\langle c(\tau_N)\hDD_2u,u\rangle dS \\
&\;=\;(\hat{D}^2u,u)_{L^2}+((\hat\Phi^2+i[\hat{D},\hat\Phi])u,u)_{L^2}+\int_N(\langle \partial_tu,u\rangle+\langle\AAA_Nu,u\rangle)dS \\
&\;\ge\;(\hat{D}^2u,u)_{L^2}+h^2\|u\|_{L^2}^2+\int_N\langle \partial_tu,u\rangle dS.
\end{aligned}
\]

By Assumption \ref{A:deformation} and Remark \ref{R:cylindricalend}, the potential $\hat V$ for $\hat D$ vanishes on $N\times[0,\infty)$. The Weitzenb\"ock identity (or general Bochner identity, cf. \cite[Proposition \upper{\romannumeral2}.8.2]{LM}) for Dirac operator gives that
\[
\hat{D}^2\;=\;\hat\nabla^*\hat\nabla+\hat{\mathcal{R}}\quad\mbox{on }N\times[0,\infty),
\]
where the bundle map term $\hat{\mathcal{R}}$ is the curvature transformation associated with the Clifford module $\hat{E}|_{N\times[0,\infty)}$. Since this bundle is the lift of $E_N$ from the compact base $N$, $\hat{\mathcal{R}}$ is bounded on $N\times[0,\infty)$. As mentioned in Remark \ref{R:deformation2}, one can choose $h$ large enough so that $h^2/2$ is greater than the upper bound of the norm $|\hat{\mathcal{R}}|$. Applying Proposition \ref{P:greensfor} to $\hat\nabla$, we have
\begin{multline*}
(\hat\nabla^*\hat\nabla u,u)_{L^2}-\|\hat\nabla u\|_{L^2}^2\;=\;\int_N\langle\sigma_{\hat\nabla^*}(\tau_N)\hat\nabla u,u\rangle dS \\
\;=\;-\int_N\langle\hat\nabla u,\sigma_{\hat\nabla}(\tau_N)u\rangle dS 
\;=\;-\int_N\langle\hat\nabla u,\tau_N\otimes u\rangle dS \\
\;=\;-\int_N\langle\hat\nabla_{\tau_N}u,u\rangle dS 
\;=\;-\int_N\langle \partial_tu,u\rangle dS.
\end{multline*}
Thus
\begin{multline*}
\|\hDD_2u\|_{L^2}^2\;\ge\;(\hat\nabla^*\hat\nabla u,u)_{L^2}+
\int_N\langle \partial_tu,u\rangle dS+h^2\|u\|_{L^2}^2+(\hat{\mathcal{R}}u,u)_{L^2} \\
\;\ge\;\|\hat\nabla u\|_{L^2}^2+\frac{h^2}{2}\|u\|_{L^2}^2 
\;\ge\;\frac{h^2}{2}\|u\|_{L^2}^2.
\end{multline*}
Therefore, $\hDD_2$ is invertible on the domain determined by the boundary condition $H_{[0,\infty)}^{1/2}(\AAA_N)$ and $\ker\hDD_2=\{0\}$. Similarly, $\ker(\hDD_2)^{\rm ad}=\{0\}$. Hence $\ind\hDD_2=0$ and $\ind\hDD=\ind\hDD_1$.
\end{proof}

Standard Atiyah-Patodi-Singer index formula (\cite[Theorem 3.10]{APS}) applies to $\ind\hDD_1$ giving that
\begin{equation}\label{E:APSindthm}
\ind\hDD_1\;=\;\int_LAS-\tilde\eta(\AAA_N)-\tilde\eta(\AAA),
\end{equation}
where $AS$ is the interior Atiyah-Singer integrand. Since the dimension of $L$ is odd, this integral vanishes. Combining Lemmas \ref{L:indD=indDhat}, \ref{L:indonL} and \eqref{E:APSindthm}, we finally obtain
\begin{equation}\label{E:ind=eta}
\ind\DD\;=\;-\tilde\eta(\AAA_N)-\tilde\eta(\AAA).
\end{equation}

\subsection{The $\eta$-invariant of the perturbed Dirac operator on $N$}\label{SS:etainv}

In the last subsection, we have expressed the index of $\DD_{B_{\rm APS}}$ in terms of $\tilde\eta(\AAA)$ and $\tilde\eta(\AAA_N)$ as in \eqref{E:ind=eta}, where
\[
\AAA_N\;=\;\AAA_{N+}\oplus\AAA_{N-}\;=\;(\partial_+-\nu h)\oplus(\partial_-+\nu h)
\]
under the splitting $E_N=E_{N+}\oplus E_{N-}$, and $\nu=ic(\tau_N)$ (cf. \eqref{E:adaponN}). In this subsection, we shall show how $\tilde\eta(\AAA_N)$ can be written as the difference of two indexes as in the right-hand side of \eqref{E:Calliasindthm}.

Recall that
\begin{equation}\label{E:tildeetainv}
\tilde\eta(\AAA_N)\;=\;\frac{1}{2}(\dim\ker\AAA_N+\eta(0;\AAA_N)).
\end{equation}
$\AAA_N$ and $\partial$  can be viewed as adapted operators to $\DD$ and $D$ on $N$, respectively. Using the fact that $\partial$ anti-commutes with $\nu$, we have
\begin{equation}\label{E:A_N^2}
\AAA_N^2\;=\;\partial^2+h^2.
\end{equation}
Since $h>0$ is a constant, $\AAA_N$ is an invertible operator, and hence
\begin{equation}\label{E:dimker=0}
\dim\ker\AAA_N\;=\;0.
\end{equation}
As for $\eta(0;\AAA_N)$, we have the following lemma.

\begin{lemma}\label{L:etainv}
$\eta(0;\AAA_N)=-\ind\partial_+^++\ind\partial_-^+$.
\end{lemma}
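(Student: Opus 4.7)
The plan is to exploit the fact that $\partial_\pm$ anti-commutes with $\nu=ic(\tau_N)$ (cf.\ \eqref{E:anticommuting}) and that $\nu^2=\id$, so that the non-kernel part of the spectrum of $\AAA_{N\pm}=\partial_\pm\mp\nu h$ appears in symmetric pairs and contributes nothing to $\eta(0;\AAA_N)$, while the kernel of $\partial_\pm$ contributes exactly $\mp\ind\partial_\pm^+$.

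First I would use the splitting $E_N=E_{N+}\oplus E_{N-}$ and \eqref{E:adaponN} to write
\[
\eta(s;\AAA_N)\;=\;\eta(s;\AAA_{N+})\,+\,\eta(s;\AAA_{N-}),
\]
and then analyze each summand separately using the spectral decomposition of $\partial_\pm$ on the closed manifold $N$. Since $\partial_\pm$ is formally self-adjoint and elliptic on $N$, it has discrete spectrum with finite-dimensional eigenspaces, and the identity $\AAA_{N\pm}^2=\partial_\pm^2+h^2$ (which follows from $\partial_\pm\nu=-\nu\partial_\pm$ and $\nu^2=\id$) ensures $\AAA_{N\pm}$ is invertible, so the eta-series converges absolutely for $\operatorname{Re} s$ large.

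Next I would observe that for any nonzero eigenvalue $\mu$ of $\partial_+$ with eigenvector $v$, the vector $\nu v$ is an eigenvector of $\partial_+$ for the eigenvalue $-\mu$; the two-dimensional subspace $\operatorname{span}\{v,\nu v\}$ is preserved by $\AAA_{N+}$, which in that basis is represented by the matrix
\[
\begin{pmatrix} \mu & -h \\ -h & -\mu \end{pmatrix},
\]
whose eigenvalues are $\pm\sqrt{\mu^2+h^2}$. These symmetric pairs appear with equal multiplicity in the spectrum of $\AAA_{N+}$ and therefore contribute $0$ to $\eta(s;\AAA_{N+})$ term by term, hence contribute $0$ after meromorphic continuation. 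The only surviving contribution comes from $\ker\partial_+$, which is preserved by $\nu$ and decomposes as $\ker\partial_+^+\oplus\ker(\partial_+^+)^*$ according to the $\pm1$-eigenspaces of $\nu$; on these subspaces $\AAA_{N+}=-\nu h$ acts as $-h$ and $+h$ respectively. This yields
\[
\eta(s;\AAA_{N+})\;=\;h^{-s}\bigl(-\dim\ker\partial_+^+\,+\,\dim\ker(\partial_+^+)^*\bigr)\;=\;-h^{-s}\ind\partial_+^+,
\]
so $\eta(0;\AAA_{N+})=-\ind\partial_+^+$. The same argument applied to $\AAA_{N-}=\partial_-+\nu h$, with the sign of the zeroth-order term flipped, gives $\eta(0;\AAA_{N-})=\ind\partial_-^+$, and the lemma follows by addition.

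The only delicate point is the justification of the term-by-term cancellation of the symmetric pairs $\pm\sqrt{\mu^2+h^2}$ before performing the meromorphic continuation. Since these pairs have identical multiplicity inside each fixed eigenspace of $\partial_+^2$, the partial sums over all $\mu$ with $\mu^2\le R$ vanish identically as functions of $s$ in the half-plane of absolute convergence, so the contribution from the non-kernel part is literally the zero function, and no genuine regularization issue arises beyond the standard one handled by Remark~\ref{R:etainv}.
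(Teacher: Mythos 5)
Your argument is correct and is essentially the same as the paper's: both split $\AAA_N=\AAA_{N+}\oplus\AAA_{N-}$, show that the spectrum of $\AAA_{N\pm}$ is symmetric except possibly at the eigenvalues $\pm h$, and identify the eigenspaces at $\pm h$ with $\ker\partial_\pm^+$ and $\ker(\partial_\pm^+)^*$, so that only the index terms survive in $\eta(0;\AAA_N)$. The only difference is organizational: the paper writes $\AAA_{N+}$ as a $2\times 2$ operator matrix in the $\nu$-grading $E_{N+}=E_{N+}^+\oplus E_{N+}^-$ and exhibits an explicit bijection between its $\lambda$- and $(-\lambda)$-eigenspaces for $|\lambda|>h$, whereas you diagonalize $\partial_+$ first and reduce $\AAA_{N+}$ to the two-dimensional blocks $\operatorname{span}\{v,\nu v\}$, where the symmetry and the eigenvalues $\pm\sqrt{\mu^2+h^2}$ are immediate; both routes give the same identification of the $\mp h$-eigenspaces with $\ker\partial_+^\pm$ and hence $\eta(0;\AAA_{N+})=-\ind\partial_+^+$ and $\eta(0;\AAA_{N-})=\ind\partial_-^+$.
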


\begin{proof}
Notice that $\AAA_N$ is a perturbation of $\partial$ by a bundle map $\nu$ which anti-commutes with it. Restricting to $E_{N+}$, we write $\AAA_N$ according to the grading $E_{N+}=E_{N+}^+\oplus E_{N+}^-$ induced by $\nu$ (see Subsection \ref{SS:Calliasindthm}),
\[
\AAA_{N+}\;=\;
\begin{bmatrix}
-h & \partial_+^- \\
\partial_+^+ & h
\end{bmatrix}.
\]

The spectrum of $\AAA_{N+}$  consists of eigenvalues with finite multiplicity. By \eqref{E:A_N^2}, the eigenvalues of $\AAA_N$ have absolute value of at least $h$. Suppose that $u=u^+\oplus u^-\in C^\infty(N,E_{N+})$ is an eigenvector of $\AAA_{N+}$ with eigenvalue $\lambda$. Then
\[
\lambda
\begin{bmatrix}
u^+ \\
u^-
\end{bmatrix}
\;=\;\AAA_{N+}
\begin{bmatrix}
u^+ \\
u^-
\end{bmatrix}\;=\;
\begin{bmatrix}
-h & \partial_+^- \\
\partial_+^+ & h
\end{bmatrix}
\begin{bmatrix}
u^+ \\
u^-
\end{bmatrix}\;=\;
\begin{bmatrix}
\partial_+^-u^--hu^+ \\
\partial_+^+u^++hu^-
\end{bmatrix},
\]
which gives
\begin{equation}\label{E:speceq}
\left\{
\begin{array}{l}
\partial_+^-u^-\;=\;(\lambda+h)u^+ \\
\partial_+^+u^+\;=\;(\lambda-h)u^-
\end{array}
\right..
\end{equation}
Then
\[
\AAA_{N+}
\begin{bmatrix}
(\lambda+h)u^+ \\
-(\lambda-h)u^-
\end{bmatrix}\;=\;
\begin{bmatrix}
-(\lambda-h)\partial_+^-u^--h(\lambda+h)u^+ \\
(\lambda+h)\partial_+^+u^+-h(\lambda-h)u^-
\end{bmatrix}
\;=\;-\lambda
\begin{bmatrix}
(\lambda+h)u^+ \\
-(\lambda-h)u^-
\end{bmatrix}.
\]
Note that the map $u^+\oplus u^-\mapsto(\lambda+h)u^+\oplus(-(\lambda-h)u^-)$ is invertible when $|\lambda|>h$. Therefore, for such $\lambda$, this map induces an isomorphism between the eigenspaces of $\AAA_{N+}$ corresponding to eigenvalues $\lambda$ and $-\lambda$. This means that the spectrum of $\AAA_{N+}$ lying in $(-\infty,-h)$ is symmetric to that lying in $(h,\infty)$, hence
\[
\eta(0;\AAA_{N+})\;=\;\dim\ker(\AAA_{N+}-h)\;-\;\dim\ker(\AAA_{N+}+h).
\]

If $u^+\oplus u^-\in\ker(\AAA_{N+}-h)$, by letting $\lambda=h$ in \eqref{E:speceq}, we get
\[
\left\{
\begin{array}{l}
\partial_+^-u^-\;=\;2hu^+ \\
\partial_+^+u^+\;=\;0
\end{array}
\right..
\]
{Applying $\partial_+^+$ to the first equation yields $(\partial_+^+\partial_+^-)u^-=0$}.
{Thus $u^-\in\ker(\partial_+^+\partial_+^-)$. Since $\partial_+$ is formally self-adjoint, $\ker(\partial_+^+\partial_+^-)=\ker\partial_+^-$.} So $u^-\in\ker\partial_+^-$ and $u^+=0$. Therefore
\[
\ker(\AAA_{N+}-h)\;=\;\{0\oplus u^-:u^-\in\ker\partial_+^-\}.
\]
Hence $\dim\ker(\AAA_{N+}-h)=\dim\ker\partial_+^-$. Similarly, $\dim\ker(\AAA_{N+}+h)=\dim\ker\partial_+^+$. Then
\[
\eta(0;\AAA_{N+})\;=\;\dim\ker\partial_+^-\;-\;\dim\ker\partial_+^+\;=\;-\ind\partial_+^+.
\]

The discussion on $E_{N-}$ is exactly the same as what we just did on $E_{N+}$. One gets
\[
\eta(0;\AAA_{N-})\;=\;\ind\partial_-^+.
\]
As a direct sum of $\AAA_{N+}$ and $\AAA_{N-}$, by the additivity of the $\eta$-invariant, finally we obtain
\[
\eta(0;\AAA_N)\;=\;-\ind\partial_+^++\ind\partial_-^+.
\]
\end{proof}

Now \eqref{E:Calliasindthm} follows simply from \eqref{E:ind=eta}, \eqref{E:tildeetainv}, \eqref{E:dimker=0} and Lemma \ref{L:etainv}. We complete the proof of Theorem \ref{T:Calliasindthm}.


\end{document}